\newtheorem{theorem}{Theorem}[section]
\newtheorem{corollary}[theorem]{Corollary}
\newtheorem{lemma}[theorem]{Lemma}
\theoremstyle{definition}
\theoremstyle{remark}
\begin{document}
\title{Existentially closed structures and some embedding theorems}
\author{\sc M. Shahryari}
\thanks{}
\address{ Department of Pure Mathematics,  Faculty of Mathematical
Sciences, University of Tabriz, Tabriz, Iran }

\email{mshahryari@tabrizu.ac.ir}
\date{\today}

\begin{abstract}
Using the notion of existentially closed structures, we obtain
embedding theorems for groups and Lie algebras. We also prove the
existence of some groups and Lie algebras with prescribed
properties.
\end{abstract}

\maketitle

{\bf AMS Subject Classification} Primary 20E45, Secondary 20E06.\\
{\bf Key Words} Algebraically closedness; Existentially closedness;
Existentially completeness; Inductive classes; Axiom of choice;
Embedding of groups and Lie algebras; HNN-extension.

\vspace{2cm}

An algebraic system $A$ is called {\em existentially closed}, if
every consistent finite set of existential sentences with parameters
from $A$, is satisfiable in $A$. Using the disjunctive normal form
of existential sentences, one can easily see that $A$ is
existentially closed, if and only if any consistent finite set of
equations and in-equations with parameters from $A$, is satisfiable
in $A$. As a special case,  we can define the notion of
existentially closed groups. It can be shown that (see \cite{H-S}),
a group $G\neq 1$ is  existentially closed group, if and only if
every consistent finite system of equations with parameters from
$G$, has a solution in $G$. In the general case, an algebraic system
with the property that every consistent finite set of equations has
a solution in that algebraic system is called {\em algebraically
closed}. So, in the case of groups (and Lie algebras), two
properties of being existentially closed and being algebraically
closed are equivalent.

The notion of existentially closed groups is introduced by a short
paper of W. R. Scott in 1951, and he used the axiom of choice to
prove the existence of such groups, \cite{Scott}. Since then, the
class of existentially closed groups is studied extensively and many
interesting properties of such groups are discovered during the past
decades. For a review of the history of existentially closed groups,
the reader can see \cite{H-S} or \cite{Lynd}.

 In this article, using the concept of existentially closed groups and Lie algebras, we prove some embedding theorems.
For example, we will show that for any set of primes $\pi$, any
infinite T$_{\pi}$-group can be embedded in a simple
$\pi^{\prime}$-divisible T$_{\pi}$-group with the same cardinality.
Further in the larger group,  elements of the same orders are
conjugate. By a T$_{\pi}$-group, we mean a group $G$ such that the
equality $x^m=1$ implies $x=1$, whenever $m$ is a
$\pi^{\prime}$-number.  By the notion of existentially closed Lie
algebras, we show that any Lie algebra $L$ can be embedded in a
simple Lie algebra $L^{\ast}$ which has many interesting properties;
for example, for any non-zero elements $a$ and $b$, there is $x$
such that $[x,a]=b$. In the case of finite fields, every finite
dimensional Lie algebra can be embedded in $L^{\ast}$ and it is
possible to describe the derivation algebra of any finite
dimensional algebra $A$ as the quotient algebra
$N_{G^{\ast}}(A)/C_{G^{\ast}}(A)$. We also prove the existence of
some groups and Lie algebras with prescribed properties. Our main
tool in this work is HNN-extensions of groups and Lie algebras. The
first one is well-known and the reader can consult any book on
combinatorial group theory (for example \cite{Lynd}) to see
definition and properties of HNN-extensions of groups. The
HNN-extensions of Lie algebras are not so popular and it seems that
there are only two articles ever published in the subject,
\cite{shirvani} and \cite{Alon}.

Although, we are dealing just with groups and Lie algebras in this
article, it is useful to give a general definition of existentially
closed structures in the frame of the universal algebra. Let
$\mathcal{L}$ be an algebraic language and $A$ be an algebra of type
$\mathcal{L}$. We extend $\mathcal{L}$ to a new language
$\mathcal{L}_A$ by adding new constant symbols $c_a$ for any $a\in
A$. Let $T_A(X)$ be the term algebra of $\mathcal{L}_A$ with
variables from a countable set $X$. If $p(x_1, \ldots, x_n)$ and
$q(x_1, \ldots, x_n)$ are elements of this term algebra, then we
call the expression $p(x_1, \ldots, x_n)=q(x_1, \ldots, x_n)$ an
equation with coefficients (or parameters) from $A$. An in-equation
is the negation of an equation. A system of equations and
in-equations over $A$ (or a system over $A$) is a finite set
consisting equations and in-equations. We say that $A$ is
existentially closed, if and only if any system over $A$ having a
solution in an extension $B$ of $A$, has already a solution in $A$.
In this article, we consider the case of groups and Lie algebras,
but some of the theory here, can be generalized to arbitrary
algebraic structures. Note that in the case of a group $G$, we may
assume that an equation has the form $w(x_1, \ldots, x_n)=1$, where
$w$ is an element of the free product $G\ast F(X)$. Here $F(X)$ is
the free group on the set $X$. Similarly, if $L$ is a Lie algebra
(or any non-associative algebra), then an equation over $L$ has the
form $w(x_1, \ldots, x_n)=0$, with $w$ and element of the free
product $L\ast F(X)$, where $F(X)$ is the free Lie algebra over $X$.

\section{Existentially closed groups}

A group $G$ is called {\em existentially closed}, if any finite
consistent system of equations and in-equations with coefficients
from $G$ has a solution in $G$. A system
$$
S=\{ w_i(x_1, \ldots, x_n)=1; (1\leq i\leq r), w_j(x_1, \ldots,
x_n)\neq 1; (r+1\leq j\leq s)\}
$$
with coefficients in $G$ is called consistent, if there is a group
$K$ containing $G$, such that $S$ has a solution in $K$. One can
generalize this definition to an arbitrary class of groups: Let
$\mathfrak{X}$ be a class of groups. A group $G\in \mathfrak{X}$ is
called existentially closed in the class $\mathfrak{X}$, if every
$\mathfrak{X}$-consistent system $S$ has a solution in $G$. Here,
$\mathfrak{X}$-consistency means that there exists a group $K\in
\mathfrak{X}$ which contains $G$ and $S$ has a solution in $K$. As
we said in the introduction, if $\mathfrak{X}$ is the class of all
groups, then existentially closedness is the same as algebraically
closedness.

The next lemma and theorem are proved in \cite{Scott} for the class of all groups, but we give here the proofs again for the sake of completeness.
Recall that a class of groups is called {\em inductive}, if it contains the union of any chain of its elements. \\

\begin{lemma}
Let $\mathfrak{X}$ be an inductive class of groups which is closed
under the operation of taking subgroups. Let $G\in \mathfrak{X}$.
Then there is a group $H\in \mathfrak{X}$ which contains $G$ and its
order is at most $\max \{ \aleph_0, |G|\}$. Further, for any finite
system $S$ of equations and in-equations over $G$, we have either of
the following assertions:

1- $S$ has a solution in $H$.

2- For any extension $H\subseteq E\in \mathfrak{X}$, the system $S$ has no solution in $E$.

\end{lemma}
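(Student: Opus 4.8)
The plan is to build $H$ as the union of a transfinite chain $(G_\alpha)_{\alpha\le\kappa}$ of members of $\mathfrak{X}$, starting from $G_0=G$, in which at each successor stage we adjoin (whenever this is possible) a solution of one prescribed system. Set $\kappa=\max\{\aleph_0,|G|\}$. First I would observe that the collection of all finite systems $S$ of equations and in-equations over $G$ has cardinality at most $\kappa$: each equation and each in-equation is determined by an element of the free product $G\ast F(X)$, which has cardinality $\kappa$ since $X$ is countable, and a finite system is just a finite subset of these data, so the number of systems is $\kappa^{<\omega}=\kappa$. Using the axiom of choice, well-order them as $\{S_\alpha:\alpha<\kappa\}$.

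Next I would carry out the recursion. Put $G_0=G$; at a limit ordinal $\lambda$ set $G_\lambda=\bigcup_{\alpha<\lambda}G_\alpha$, which lies in $\mathfrak{X}$ because $\mathfrak{X}$ is inductive. At a successor $\alpha+1$, look at $S_\alpha$, which is a system over $G$ and hence over $G_\alpha$. If there is an extension $G_\alpha\subseteq E\in\mathfrak{X}$ in which $S_\alpha$ has a solution $(g_1,\dots,g_n)$, choose one such $E$ and one such solution and set $G_{\alpha+1}=\langle G_\alpha, g_1,\dots,g_n\rangle\le E$; otherwise set $G_{\alpha+1}=G_\alpha$. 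In the first case $G_{\alpha+1}\in\mathfrak{X}$ by closure under subgroups, and since $G_{\alpha+1}$ contains the coefficients (all lying in $G\subseteq G_\alpha$) together with $g_1,\dots,g_n$, the same tuple still satisfies $S_\alpha$ inside $G_{\alpha+1}$. Finally set $H=G_\kappa$. Here the main design tension appears: to witness every solvable system we are tempted to absorb the whole $E$, but that would destroy the cardinality bound; the resolution is to adjoin only the finitely many generators $g_1,\dots,g_n$ at each step and rely on closure under subgroups. Indeed, a transfinite induction then gives $|G_\alpha|\le\kappa$ for all $\alpha$ — successor steps add finitely many elements, so $|G_{\alpha+1}|\le\max\{\aleph_0,|G_\alpha|\}\le\kappa$, and at a limit $\lambda\le\kappa$ one has $|G_\lambda|\le\kappa\cdot\kappa=\kappa$ — whence $|H|\le\kappa$.

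It remains to verify the dichotomy, which is the conceptual heart of the argument. Given any system $S$ over $G$, write $S=S_\alpha$ and examine stage $\alpha+1$. If a solution was adjoined there, then $S$ already has a solution in $G_{\alpha+1}\subseteq H$, which is assertion (1). If not, then by construction $S_\alpha$ has no solution in any extension of $G_\alpha$ lying in $\mathfrak{X}$. Now every extension $H\subseteq E\in\mathfrak{X}$ also satisfies $G_\alpha\subseteq E\in\mathfrak{X}$, so $S$ cannot have a solution in $E$ either, which is assertion (2). The one point requiring attention is precisely this final inclusion step — that every extension of $H$ is automatically an extension of $G_\alpha$ — which is what lets the non-solvability recorded at stage $\alpha$ persist all the way up to $H$ and thereby closes the dichotomy.
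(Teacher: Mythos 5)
Your proof is correct and follows essentially the same approach as the paper: transfinite recursion over a well-ordering of the $\kappa$-many systems, adjoining only the finitely many solution coordinates at each step (relying on closure under subgroups for the cardinality bound and on inductiveness at limits), with the dichotomy settled by noting that any extension of $H$ extends the stage-$\alpha$ group. The only differences are organizational (you separate limit and successor stages where the paper folds them into one step via the groups it calls $K_\alpha$), so there is nothing to add.
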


\begin{proof}
Suppose $\kappa=\max \{ \aleph_0, |G|\}$. Clearly the cardinality of
the set of all finite systems of equations and inequations over $G$,
is also at most $\kappa$. We suppose that this set is well-ordered
as $\{ S_{\alpha}\}_{\alpha}$, by the ordinals $0\leq \alpha \leq
\kappa$. Let $G_0=G$ and suppose that $G_{\gamma}\in \mathfrak{X}$
is already defined in such a way that $|G_{\gamma}|\leq \kappa$ and
$\beta<\gamma$ implies $G_{\beta}\subseteq G_{\gamma}$. Let
$$
K_{\alpha}=\bigcup_{\gamma<\alpha}G_{\gamma}.
$$
Clearly, $K_{\alpha}\in \mathfrak{X}$ and $|K_{\alpha}|\leq \kappa^2=\kappa$. If $S_{\alpha}$ has no solution in any extension of $K_{\alpha}$,
then we set $G_{\alpha}=K_{\alpha}$, otherwise there is a $E\in \mathfrak{X}$ which contains $K_{\alpha}$ and $S_{\alpha}$ has a solution,
say $\bar{u}=(u_1, \ldots, u_n)$ in $E$ ($n$ is the number of indeterminate in $S_{\alpha}$). Let
$$
G_{\alpha}=\langle K_{\alpha}, u_1, \ldots, u_n\rangle \leq E.
$$
Then $G_{\alpha}\in \mathfrak{X}$ and $|G_{\alpha}|\leq \kappa$. So, for any $\alpha<\kappa$, we have defined a $G_{\alpha}$. Note that, we have also
$$
\beta<\alpha \Rightarrow G_{\beta}\subseteq G_{\alpha}.
$$
Now, the group $H=\cup G_{\alpha}\in \mathfrak{X}$ has the required properties. This is because, any system $S=S_{\alpha}$ which has no solution in $H$,
has no solution in $K_{\alpha}$ as well. So it has no solution in any extension of $H$.
\end{proof}

\begin{theorem}
Let $\mathfrak{X}$ be an inductive class of groups which is closed under the operation of taking subgroups. Let $G\in \mathfrak{X}$. Then,
there exists a group $G^{\ast}\in \mathfrak{X}$, with the following properties,

1- $G$ is a subgroup of $G^{\ast}$.

2- $G^{\ast}$ is existentially closed in the class $\mathfrak{X}$.

3- $|G^{\ast}|\leq \max \{ \aleph_0, |G|\}$.
\end{theorem}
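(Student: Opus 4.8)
The plan is to iterate the Lemma countably many times and take the union, which is Scott's original construction. Set $\kappa = \max\{\aleph_0, |G|\}$. Starting from $G_0 = G$, I would apply the Lemma to $G_0$ to obtain a group $G_1 \in \mathfrak{X}$ with $G_0 \subseteq G_1$, $|G_1| \leq \kappa$, and the stated dichotomy holding for every finite system over $G_0$. Applying the Lemma again to $G_1$ yields $G_2$, and so on, producing an ascending chain
$$
G = G_0 \subseteq G_1 \subseteq G_2 \subseteq \cdots
$$
of members of $\mathfrak{X}$, each of cardinality at most $\kappa$. I then set $G^{\ast} = \bigcup_{n} G_n$.

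Properties 1 and 3 are immediate. We have $G = G_0 \subseteq G^{\ast}$, and $G^{\ast}$ is a countable union of sets of size at most $\kappa$, so $|G^{\ast}| \leq \aleph_0 \cdot \kappa = \kappa$. Since $\mathfrak{X}$ is inductive and $G^{\ast}$ is the union of a chain drawn from $\mathfrak{X}$, we also obtain $G^{\ast} \in \mathfrak{X}$.

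The heart of the argument is property 2. Let $S$ be a finite system over $G^{\ast}$ that is $\mathfrak{X}$-consistent, so there is $K \in \mathfrak{X}$ with $G^{\ast} \subseteq K$ in which $S$ has a solution. As $S$ involves only finitely many parameters and these lie along the chain, all parameters of $S$ belong to some $G_n$; hence $S$ is a finite system over $G_n$. I now invoke the dichotomy of the Lemma at stage $n$, applied to $G_{n+1}$: either $S$ has a solution in $G_{n+1}$, or $S$ has no solution in any extension of $G_{n+1}$ lying in $\mathfrak{X}$. But $K$ is exactly such an extension, since $G_{n+1} \subseteq G^{\ast} \subseteq K$, and $S$ is solvable in $K$; so the second alternative is excluded. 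Therefore $S$ has a solution already in $G_{n+1} \subseteq G^{\ast}$, proving that $G^{\ast}$ is existentially closed in $\mathfrak{X}$.

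The main subtlety, and really the only nontrivial point, is the reduction showing that a consistent system over $G^{\ast}$ is in fact a system over some finite stage $G_n$. This rests crucially on a system being a \emph{finite} object with finitely many parameters, so that passing to the union introduces no genuinely new systems beyond those already handled at some stage. Once this is granted, the Lemma's dichotomy closes the argument at once; the only care needed is to confirm that the witnessing extension $K$ really contains the relevant $G_{n+1}$, so that the ``no solution in any extension'' branch can be ruled out.
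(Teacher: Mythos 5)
Your proposal is correct and follows essentially the same route as the paper: iterate the Lemma countably many times, take the union of the resulting chain, and use finiteness of a system to locate its parameters in some stage $G_n$ so that the Lemma's dichotomy (applied at that stage and ruled out on its second branch by the witnessing extension $K$) yields a solution in $G_{n+1}\subseteq G^{\ast}$. In fact you spell out the appeal to the dichotomy more explicitly than the paper does, which simply asserts that $S$ has a solution in the next stage.
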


\begin{proof}
Let $G^0=G$ and $G^1=H$, where $H$ is the group constructed in the lemma. Suppose $G^m$ is already defined and $G^{m+1}$ is the group which is
proved to does exist for $G^m$ in the lemma. Let $G^{\ast}=\cup G^m$. Therefore, $G^{\ast}\in \mathfrak{X}$, satisfies conditions 1 and 3.
To prove 2, suppose $S$ is a consistent system, with coefficients from $G^{\ast}$. Since $S$ is finite, so there is an $m$ such that all of the
coefficients of $S$ belong to $G^m$. So, $S$ has a solution in $G^{m+1}\subseteq G^{\ast}$.
\end{proof}

Note that in the number 3 of the above theorem, we may have
$|G^{\ast}|< \max \{ \aleph_0, |G|\}$. For example, suppose $G$ is
an arbitrary group and $\mathfrak{X}$ is the set of all subgroups of
$G$. Then trivially, $G^{\ast}=G$ and so, if $G$ is finite, then the
inequality 3 is strict. Note that also, there are many inductive
classes of groups, which are closed under subgroup: any variety,
quasi-variety and in general, any universal class of groups has this
property.  As the first application of the above theorem, we show
that every torsion free group can be embedded in a  torsion free
group with exactly two conjugacy classes.  Note that we can use a
similar arguments to prove the existence of torsion free groups of
any infinite cardinality with just two conjugacy classes. Also note
that the type of the group we are giving here, is not new, it is known from works of Higman, Neumann and Neumann \cite{Lynd}.\\

\begin{corollary}
Every torsion free group $G$ can be embedded in a torsion free group
$G^{\ast}$ which has just two conjugacy classes and its cardinality
is the same as the cardinality of $G$.
\end{corollary}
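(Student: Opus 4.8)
The plan is to apply the preceding theorem to the class $\mathfrak{X}$ of all torsion free groups, and then to use a suitable HNN-extension to turn existential closedness into the statement that all nonidentity elements are conjugate. Throughout I assume $G\neq 1$, since a nontrivial torsion free group is necessarily infinite; the trivial case is degenerate because a group with two conjugacy classes cannot have cardinality $1$.

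First I would verify that $\mathfrak{X}$ satisfies the hypotheses of the theorem. It is closed under subgroups, since a subgroup of a torsion free group is torsion free. It is inductive, because any element of finite order in the union of a chain already lies in some member of the chain, which would then fail to be torsion free. Hence the theorem produces a torsion free group $G^{\ast}\supseteq G$ that is existentially closed in $\mathfrak{X}$ and satisfies $|G^{\ast}|\leq \max\{\aleph_0,|G|\}$. Since $G\neq 1$ is infinite we have $\max\{\aleph_0,|G|\}=|G|$, and as $G\subseteq G^{\ast}$ forces $|G^{\ast}|\geq|G|$, we conclude $|G^{\ast}|=|G|$, giving properties of cardinality and embedding at once.

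Next I would show that $G^{\ast}$ has exactly two conjugacy classes. Fix two nonidentity elements $a,b\in G^{\ast}$; being torsion free, both have infinite order, so $\langle a\rangle$ and $\langle b\rangle$ are infinite cyclic and the assignment $a\mapsto b$ extends to an isomorphism between them. I may therefore form the HNN-extension $K=\langle G^{\ast},t\mid t^{-1}at=b\rangle$, in which $G^{\ast}$ embeds and in which $a$ and $b$ become conjugate via $t$. Provided $K$ is itself torsion free, this exhibits the single equation $x^{-1}axb^{-1}=1$ over $G^{\ast}$ as $\mathfrak{X}$-consistent, so by existential closedness of $G^{\ast}$ it already has a solution there, i.e.\ $a$ and $b$ are conjugate in $G^{\ast}$. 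As $a,b$ were arbitrary nonidentity elements, they are all conjugate to one another, and together with $\{1\}$ this yields exactly two conjugacy classes.

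The main obstacle is the torsion freeness of the HNN-extension $K$, on which the whole consistency argument depends. Here I would invoke the standard structure theory of HNN-extensions: by Britton's lemma every element of finite order in $\langle G^{\ast},t\mid t^{-1}at=b\rangle$ is conjugate into the base group $G^{\ast}$, and since $G^{\ast}$ is torsion free, so is $K$; thus $K\in\mathfrak{X}$, as required. The remaining ingredients—that the associated cyclic subgroups are genuinely isomorphic and that the base group embeds into the HNN-extension—are the routine defining properties of HNN-extensions and require no extra work.
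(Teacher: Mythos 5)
Your proposal is correct and follows essentially the same route as the paper: apply Theorem 1.2 to the class of torsion free groups, then use the HNN-extension $\langle G^{\ast},t:\ tat^{-1}=b\rangle$ together with the fact that torsion elements of an HNN-extension are conjugate into the base group to conclude that any two nonidentity elements are conjugate. In fact you are slightly more careful than the paper, since you justify the cardinality claim $|G^{\ast}|=|G|$ (via the observation that a nontrivial torsion free group is infinite) and explicitly exclude the degenerate case $G=1$, for which the statement cannot hold as written.
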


\begin{proof}
Suppose $\mathfrak{X}$ is the class of all torsion free groups.
Hence $\mathfrak{X}$ is inductive and closed under the operation of
taking subgroups. We, begin with the group $G\in \mathfrak{X}$.
Suppose $G^{\ast}\in \mathfrak{X}$ is the existentially closed group
relative to $\mathfrak{X}$, which is constructed for $G$ in the
theorem. We show that $G^{\ast}$ is the required group. Let $a, b\in
G^{\ast}$ be two non-identity elements. Consider the equation
$xax^{-1}=b$. Let
$$
G^{\ast}_{a, b}=\langle G^{\ast}, t:\ tat^{-1}=b\rangle
$$
be an HNN-extension of $G^{\ast}$. We know that every torsion
element of this HNN-extension is conjugate to a torsion element of
$G^{\ast}$, so $G^{\ast}_{a, b}$ is torsion free. It also contains
$G^{\ast}$ as a subgroup and clearly $t$ is a solution for
$xax^{-1}=b$ in $G^{\ast}_{a, b}$. Therefor, there is already a
solution in $G^{\ast}$. Hence $G^{\ast}$ is a  torsion free group,
containing $G$,  with just two conjugacy classes. Further
$|G^{\ast}|=|G|$.
\end{proof}

\section{An embedding theorem for T$_{\pi}$-groups}

It is easy to see that the simple group $G^{\ast}$ obtained in the
corollary 1.3 is divisible. In this section, we prove a similar
result for T$_{\pi}$-groups. Let $\pi$ be a set of primes and $G$ be
a group, such that its torsion elements have $\pi$-orders, i.e. the
equality $x^m=1$ implies that $x=1$ or $m$ is a $\pi$-number. Then
we say that $G$ is a T$_{\pi}$-group. If $\pi=\emptyset$, then
T$_{\pi}$-groups are exactly torsion free groups. Note that a finite
T$_{\pi}$-group is just a finite $\pi$-group. A group $G$ is also
said to be a $\pi^{\prime}$-group, if for any $\pi^{\prime}$-number
$m$, every element of $G$ has an $m$-th root.

\begin{theorem}
Let $G$ be a T$_{\pi}$-group. Then there exists a
$\pi^{\prime}$-divisible simple  T$_{\pi}$-group $G^{\ast}$
containing $G$ and with the cardinality  $\max \{ \aleph_0, |G|\}$,
such that

1- element of the same order in $G^{\ast}$ are conjugate,

2- $G^{\ast}$ is not finitely generated,

3-  every finite $\pi$-group embeds in $G^{\ast}$.

4- every finitely presented $\pi$-group can be residually embedded
in $G^{\ast}$,

5- for any finite $\pi$-group $A$, we have $Aut(A)\cong
\frac{N_{G^{\ast}}(A)}{C_{G^{\ast}}(A)}$.
\end{theorem}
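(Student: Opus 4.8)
The plan is to take $\mathfrak{X}$ to be the class of all T$_\pi$-groups and to apply Theorem~1.2. First I would check that $\mathfrak{X}$ is inductive and closed under subgroups: a subgroup of a T$_\pi$-group is visibly a T$_\pi$-group, and if $\{G_i\}$ is a chain of T$_\pi$-groups then any torsion element of $\bigcup_i G_i$ already lies in some $G_i$, so its order is a $\pi$-number. Hence Theorem~1.2 furnishes a T$_\pi$-group $G^{\ast}\supseteq G$ that is existentially closed in $\mathfrak{X}$ and has cardinality at most $\max\{\aleph_0,|G|\}$; since $G^{\ast}$ will turn out to be infinite, the cardinality is exactly $\max\{\aleph_0,|G|\}$. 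The organizing principle for every item is the same: each assertion is encoded as a finite system $S$ of equations and inequations with parameters from $G^{\ast}$, and it suffices to exhibit one T$_\pi$-group $H\supseteq G^{\ast}$ in which $S$ has a solution; existential closedness then yields a solution already in $G^{\ast}$. The consistency witnesses $H$ will be HNN-extensions and free products, which stay inside $\mathfrak{X}$ because in both constructions every torsion element is conjugate into the base group (respectively into a free factor), so no torsion of non-$\pi$-order is created.

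For item~1, given $a,b\in G^{\ast}$ of the same order the map $a\mapsto b$ is an isomorphism $\langle a\rangle\to\langle b\rangle$, so the HNN-extension $\langle G^{\ast},t:\ tat^{-1}=b\rangle$ is a T$_\pi$-group in which $xax^{-1}=b$ is solvable; existential closedness conjugates $a$ to $b$ inside $G^{\ast}$. For $\pi^{\prime}$-divisibility, fix a $\pi^{\prime}$-number $m$ and $g\in G^{\ast}$: if $g$ has finite ($\pi$-)order then $\gcd(m,\mathrm{ord}(g))=1$ and an $m$-th root already exists in $\langle g\rangle$, while if $g$ has infinite order the amalgam $G^{\ast}\ast_{\langle g\rangle}\langle y\mid y^{m}=g\rangle$ is a T$_\pi$-group solving $x^{m}=g$. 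Items~3--5 are equally direct finite-system arguments: a finite $\pi$-group $A$ embeds because its multiplication table, written as equations $z_iz_j=z_{m(i,j)}$ together with the inequations $z_i\neq z_j$, is solvable in $G^{\ast}\ast A$; a finitely presented $\pi$-group $A=\langle x_1,\dots,x_n\mid r_1,\dots\rangle$ is residually embedded by solving, for each $1\neq w\in A$, the finite system $\{r_\ell=1\}\cup\{w\neq1\}$ in $G^{\ast}\ast A$; and for item~5 each $\varphi\in\mathrm{Aut}(A)$ is realized by conjugation using the HNN-extension $\langle G^{\ast},t:\ tat^{-1}=\varphi(a),\ a\in A\rangle$, whose stable letter normalizes $A$ and induces $\varphi$, so existential closedness produces such an element of $N_{G^{\ast}}(A)$; together with the always-injective conjugation map $N_{G^{\ast}}(A)/C_{G^{\ast}}(A)\hookrightarrow\mathrm{Aut}(A)$ this gives the stated isomorphism.

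The substantive step is simplicity. I would reduce it to a single statement: for every $1\neq n\in G^{\ast}$ and every $g\in G^{\ast}$, the equation $g=xnx^{-1}\,yn^{-1}y^{-1}$ (in unknowns $x,y$) is $\mathfrak{X}$-consistent, and hence solvable in $G^{\ast}$. Granting this, if $N\neq1$ is normal and $n\in N\setminus\{1\}$, then every $g$ is a product of two conjugates of $n^{\pm1}$, all of which lie in $N$; thus $N=G^{\ast}$ and $G^{\ast}$ is simple. Item~1 makes the picture transparent: the conjugacy classes are exactly the sets of elements of a fixed order, so $N$ contains a whole such class, and one is writing $g$ as a product of two members of that class. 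The main obstacle is the consistency claim, which is a Higman--Neumann--Neumann type embedding: one must build a T$_\pi$-group $H\supseteq G^{\ast}$ in which $g$ becomes a product of two conjugates of $n$. I would produce $H$ as a free product with amalgamation over the cyclic subgroup $\langle n\rangle$, arranged so that a prescribed conjugate $p$ of $n$ and the complementary factor $p^{-1}g$ are simultaneously conjugate to $n$ inside $H$; the delicate points are engineering this identification and verifying, via the normal-form and torsion theorem for such amalgams, that $H$ remains a T$_\pi$-group.

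Finally, for item~2 I would argue as is standard for existentially closed groups: $G^{\ast}$ cannot be finitely generated because a finitely generated group has only countably many isomorphism types of finitely generated subgroups, whereas the genericity of $G^{\ast}$ forces it to contain copies of continuum-many pairwise non-isomorphic two-generated T$_\pi$-groups (for instance torsion-free ones, which are T$_\pi$ for every $\pi$). Establishing the requisite universality of $G^{\ast}$ is the one place where more than a single finite system is needed, and I would treat it by approximating a countable T$_\pi$-group by its finitely generated subgroups and embedding these successively. This also shows $G^{\ast}$ is infinite, pinning down the cardinality asserted in the statement.
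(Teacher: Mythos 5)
Your overall frame (take $\mathfrak{X}_{\pi}$ to be the class of T$_{\pi}$-groups, apply Theorem~1.2, and convert each assertion into a finite system whose consistency is witnessed by an HNN-extension or free product) is exactly the paper's, and your treatments of items 1, 3, 4, 5 and of $\pi^{\prime}$-divisibility are correct (your divisibility argument via the amalgam $G^{\ast}\ast_{\langle g\rangle}\langle y\rangle$ is a valid variant of the paper's slicker one, which conjugates $x$ to $x^m$ using item~1 and takes $u=zxz^{-1}$). But there are two genuine gaps. The first is simplicity: you reduce it to the claim that every $g$ is a product of two conjugates of $n^{\pm 1}$, and then leave the consistency of that claim as an acknowledged ``delicate point.'' The hole is real, not cosmetic. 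A conjugate of $n^{\pm 1}$ has the same order as $n$, so when $n$ has finite ($\pi$-)order your two factors $p$ and $p^{-1}g$ must both be arranged to have that same finite order in the witness group $H$; the natural candidate fails, since in $G^{\ast}\ast\langle p\rangle$ with $p$ of order $\mathrm{ord}(n)$ the element $p^{-1}g$ is cyclically reduced of length $2$, hence of infinite order, and therefore cannot be conjugated to $n^{-1}$ in \emph{any} overgroup. Repairing this needs genuinely more machinery (e.g.\ a triangle-group-type amalgam $\langle p,q \mid p^m=q^m=(pq)^k=1\rangle$ glued to $G^{\ast}$ over $\langle g\rangle=\langle pq\rangle$, plus a torsion analysis of that amalgam). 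The paper sidesteps the order-matching problem entirely: it works in $G^{\ast}\ast\langle x\rangle$ with $u=wxw^{-1}x^{-1}$ and $v=axw^{-1}x^{-1}$, which are cyclically reduced of length at least $2$ and so automatically of infinite order; the HNN-extension $\langle G^{\ast}\ast\langle x\rangle, t:\ tut^{-1}=v\rangle$ then lies in $\mathfrak{X}_{\pi}$, and solving $tut^{-1}=v$ inside $G^{\ast}$ writes $a$ as a product of three conjugates of $w^{\pm 1}$, giving $a\in\langle w^{G^{\ast}}\rangle$ for every $1\neq w$.

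The second gap is item~2, where your argument is not just incomplete but false. Since $|G^{\ast}|=\max\{\aleph_0,|G|\}$, the group $G^{\ast}$ is countable whenever $G$ is, and a countable group has only countably many ordered pairs of elements, hence only countably many $2$-generated subgroups; it cannot contain continuum-many pairwise non-isomorphic two-generated groups. Moreover, the mechanism you invoke is unavailable: embedding a prescribed infinite finitely generated group is not expressible by a single finite system of equations and inequations, and the ``successive approximation'' you propose founders on the coherence of the successive embeddings (this is precisely the classical subtlety about which finitely generated groups embed in existentially closed groups). The paper's argument is elementary and works at every cardinality: for any $a_1,\ldots,a_m\in G^{\ast}$ the system $a_ix=xa_i$ $(1\leq i\leq m)$, $x\neq 1$ has a solution in the T$_{\pi}$-group $G^{\ast}\times\langle x\rangle$, hence in $G^{\ast}$, so every finitely generated subgroup has nontrivial centralizer; but $G^{\ast}$ is nonabelian simple, so $Z(G^{\ast})=1$, and therefore no finite set can generate $G^{\ast}$. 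This also yields the infiniteness you need to pin the cardinality at exactly $\max\{\aleph_0,|G|\}$.
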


\begin{proof}
Let $\mathfrak{X}_{\pi}$ be the class of all T$_{\pi}$-groups. This
class is  inductive and closed under subgroup. Since $G\in
\mathfrak{X}_{\pi}$, so there exists a group $G^{\ast}\in
\mathfrak{X}_{\pi}$ containing $G$, which is existentially closed in
the class $\mathfrak{X}_{\pi}$. Suppose $1\neq a, b\in G^{\ast}$
have the same orders. We show that $a$ and $b$ are conjugate.
Suppose the HNN-extension
$$
G^{\ast}_{a,b}=\langle G^{\ast}, t: tat^{-1}=b\rangle.
$$
It is enough to show that $G^{\ast}_{a,b}$ belongs to
$\mathfrak{X}_{\pi}$. Let $x$ be an element of finite order in
$G^{\ast}_{a, b}$. We know that $x$ is conjugate to an element of
$G^{\ast}$, see \cite{Lynd}. So the order of $x$ is a $\pi$-number,
showing that $G^{\ast}_{a,b}\in \mathfrak{X}_{\pi}$.  This shows
that elements of the same order in $G^{\ast}$ are conjugate. To show
that $G^{\ast}$ is $\pi^{\prime}$-divisible, let $x\in G^{\ast}$ and
$m$ be a $\pi^{\prime}$-number. We know that the orders of $x$ and
$x^m$ are equal. Hence there is a $z\in G^{\ast}$ such that
$x=zx^mz^{-1}$. Suppose $u=zxz^{-1}$. Then $u^m=x$ and hence
$G^{\ast}$ is $\pi^{\prime}$-divisible.

We show that $G^{\ast}$ is simple. Note that $G^{\ast}\ast \langle
x\rangle$ is a T$_{\pi}$-group. This follows from the fact that
$G^{\ast}$ is an T$_{\pi}$-group and the the reduced form of
elements in free products is unique. Let $1\neq a, w\in G^{\ast}$ be
arbitrary elements and suppose $u=wxw^{-1}x^{-1}$ and
$v=axw^{-1}x^{-1}$. Then $u$ and $v$ are reduced in the free product
and so they have infinite orders. Hence we can consider the
HNN-extension
$$
M=\langle G^{\ast}\ast \langle x\rangle, t: tut^{-1}=v\rangle.
$$
With the same argument (as for $G^{\ast}_{a,b}$), we see that $M$ is
also an T$_{\pi}$-group. The equation
$$
twxw^{-1}x^{-1}t^{-1}=axw^{-1}x^{-1}
$$
has a solution for $t$ and $x$ in $M$, therefore it has already a solution in $G^{\ast}$. We have
$$
a=(twt^{-1})(txw^{-1}x^{-1}t^{-1})(xw^{-1}x^{-1}),
$$
so $a\in \langle w^{G^{\ast}}\rangle$. Hence, for all $1\neq w\in G^{\ast}$, we have $G^{\ast}=\langle w^{G^{\ast}}\rangle$.
This proves that $G^{\ast}$ is simple.

Now, since $G^{\ast}$ is non-abelian simple group, we have $Z(G^{\ast})=1$. On the other hand, for any finite subset
$a_1, \ldots, a_m\in G^{\ast}$, the system
$$
a_1x=xa_1, \ldots, a_mx=xa_m, x\neq 1,
$$
has a solution in the T$_{\pi}$-group $G^{\ast}\times \langle
x\rangle$, so it has a solution in $G^{\ast}$. Therefore
$$
C_{G^{\ast}}(\langle a_1, \ldots, a_m\rangle)\neq 1.
$$
This proves that $G^{\ast}$ is not finitely generated.

Now, suppose that $A$ is a finite $\pi$-group and
$$
A=\{ 1=g_0, g_1, \ldots, g_m\}.
$$
For any $1\leq i, j\leq m$ there exists a unique $0\leq k(i, j)\leq
m$, such that $g_ig_j=g_{k(i,j)}$. The group $G^{\ast}\times A$ is
clearly an T$_{\pi}$-group and the system
$$
x_ix_j=x_{k(i.j)}; (0\leq i, j\leq m); x_i\neq x_j; (0\leq i<j\leq m)
$$
has a solution in $G^{\ast}\times A$ and hence it has a solution in
$G^{\ast}$. This shows that $A$ is embedded in $G^{\ast}$. To prove
$4$, let $A$ be a finitely presented $\pi$-group, with a finite
presentation
$$
A=\langle x_1, \ldots, x_n: r_1, \ldots, r_m\rangle.
$$
Suppose $1\neq w\in A$ and consider the system
$$
r_i(x_1, \ldots, x_n)=1; (1\leq i\leq m), w(x_1, \ldots,x_n)\neq 1.
$$
Clearly this equation has a solution in $G^{\ast}\times A\in
\mathfrak{X}_{\pi}$, and hence it has a solution in $G^{\ast}$. This
proves that there is a homomorphism $\varphi: A\to G^{\ast}$ with
$\varphi(w)\neq 1$. Hence $A$ embeds in $G^{\ast}$ residually. We
now prove $5$. Suppose again that $A$ is a finite $\pi$-group and
$\alpha\in Aut(A)$. This time, we consider the system
$$
xax^{-1}=\alpha(a);\ (a\in A).
$$
This system has a solution in the HNN-extension
$$
G^{\ast}_{\alpha}=\langle G^{\ast}, t: tat^{-1}=\alpha(a); (a\in A)\rangle.
$$
 Therefore, there exists an element $x\in G^{\ast}$ such that $\alpha$ is equal to the restriction of
the inner automorphism $T_x$ to $A$. This also shows that $x\in
N_{G^{\ast}}(A)$ and so the map which sends $x$ to the restriction
of $T_x$ on $A$ is an epimorphism from $N_{G^{\ast}}(A)$ to $Aut(A)$
with the kernel $C_{G^{\ast}}(A)$. This completes the proof of $5$.
\end{proof}

We can generalize the above theorem for HNN-classes of groups. A
class $\mathfrak{X}$ of groups is called an HNN-class if it is
inductive, closed under subgroup, and  HNN-extensions of the
elements of $\mathfrak{X}$ belong to $\mathfrak{X}$. The class of
all T$_{\pi}$-groups is an example of HNN-classes. By a similar
argument as in the above theorem, we can prove the next result.

\begin{theorem}
Let $\mathfrak{X}$ be an HNN-class of groups. Then every element $G$
of $\mathfrak{X}$ embeds in a simple $G^{\ast}\in \mathfrak{X}$,
with the cardinality at most $\max \{ \aleph_0, |G|\}$. Further
elements of the same orders in $G^{\ast}$ are conjugates.
\end{theorem}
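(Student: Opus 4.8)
The plan is to follow the proof of Theorem 2.1 almost verbatim, replacing each appeal to the special structure of $\mathfrak{X}_{\pi}$ by the abstract HNN-class axioms. First I would invoke Theorem 1.2: since $\mathfrak{X}$ is inductive, closed under subgroups, and $G\in\mathfrak{X}$, there is a group $G^{\ast}\in\mathfrak{X}$ that is existentially closed in $\mathfrak{X}$, contains $G$, and satisfies $|G^{\ast}|\leq\max\{\aleph_0,|G|\}$. This $G^{\ast}$ will be the group sought; the cardinality clause is then immediate, and it remains only to verify the conjugacy of equal-order elements and the simplicity of $G^{\ast}$, which is where the HNN-closure of $\mathfrak{X}$ does the work.

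For the conjugacy assertion I would take $1\neq a,b\in G^{\ast}$ of the same order. Then $a\mapsto b$ is an isomorphism $\langle a\rangle\to\langle b\rangle$ of cyclic groups of equal (possibly infinite) order, so the HNN-extension $G^{\ast}_{a,b}=\langle G^{\ast},t:\ tat^{-1}=b\rangle$ is defined, and because $\mathfrak{X}$ is an HNN-class we have $G^{\ast}_{a,b}\in\mathfrak{X}$. The equation $xax^{-1}=b$ is solved by $x=t$ in $G^{\ast}_{a,b}$, so by existential closedness it already has a solution in $G^{\ast}$, whence $a$ and $b$ are conjugate in $G^{\ast}$.

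The simplicity argument is where I expect the only genuine subtlety, and it is one of book-keeping rather than depth. In Theorem 2.1 the free product $G^{\ast}\ast\langle x\rangle$ was placed in $\mathfrak{X}_{\pi}$ by inspecting reduced forms; here I would instead observe that a free product with an infinite cyclic group is a degenerate HNN-extension, namely $G^{\ast}\ast\langle x\rangle=\langle G^{\ast},x:\ x\,1\,x^{-1}=1\rangle$ with both associated subgroups trivial, so the HNN-class axiom yields $G^{\ast}\ast\langle x\rangle\in\mathfrak{X}$ directly. Given $1\neq a,w\in G^{\ast}$, set $u=wxw^{-1}x^{-1}$ and $v=axw^{-1}x^{-1}$. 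These are cyclically reduced words of length four in the free product, hence of infinite order in any group, so $\langle u\rangle\cong\langle v\rangle\cong\mathbb{Z}$ and the HNN-extension $M=\langle G^{\ast}\ast\langle x\rangle,t:\ tut^{-1}=v\rangle$ is defined and lies in $\mathfrak{X}$ by HNN-closure. The relation $twxw^{-1}x^{-1}t^{-1}=axw^{-1}x^{-1}$, read as a system in the unknowns $t,x$ with coefficients $a,w$, is solved in $M$ and therefore in $G^{\ast}$; substituting such a solution and simplifying expresses $a$ as a product of conjugates of $w$ and $w^{-1}$, so $a\in\langle w^{G^{\ast}}\rangle$. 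As $1\neq a$ was arbitrary, $\langle w^{G^{\ast}}\rangle=G^{\ast}$ for every $1\neq w$, and $G^{\ast}$ is simple.

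The one thing I would flag as the main obstacle is purely definitional: I must ensure that the notion of HNN-class is understood to cover the degenerate HNN-extensions with trivial associated subgroups (giving the free product) as well as those with infinite-cyclic associated subgroups used for $u,v$. Granting that reading of the axiom, the proof goes through, and it is worth emphasizing that the infinitude of the orders of $u$ and $v$ is automatic from the theory of free products and uses nothing special about $\mathfrak{X}$, so the torsion hypotheses that were needed in the $\mathfrak{X}_{\pi}$ case drop out entirely.
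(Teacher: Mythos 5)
Your proposal is correct and is exactly the argument the paper intends: the paper offers no separate proof of this theorem, saying only that it follows ``by a similar argument'' to Theorem 2.1, and your write-up is precisely that argument carried out --- Theorem 1.2 for existence, the HNN-closure axiom replacing the torsion analysis of $G^{\ast}_{a,b}$, and the same two-unknown equation $twxw^{-1}x^{-1}t^{-1}=axw^{-1}x^{-1}$ for simplicity. Your observation that $G^{\ast}\ast\langle x\rangle$ is the degenerate HNN-extension with trivial associated subgroups is exactly the bookkeeping needed to replace the T$_{\pi}$-specific reduced-form verification, and it settles the one point the paper leaves implicit.
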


\section{Some Olshanskii like groups}
In mid twenties, Alfred Tarski asked about the existence of
infinite groups all proper non-trivial subgroups of which are of
fixed prime order $p$. In 1982, A. Yu. Olshanskii \cite{Olshan},
constructed an uncountable family of such groups using his geometric
method of graded diagrams over groups, for all primes $p>10^{75}$.
The groups constructed are called Tarskii monsters since then. These
groups are two-generator simple groups and hence are countable. In
this section, for any fixed prime $p$,  we give a quite elementary
proof for existence of countable non-abelian simple groups with the
property that their all  non-trivial {\em finite subgroups} are
cyclic of order $p$.

We will consider two special classes of groups in this section. The
first one consists of  groups all finite  subgroups in which are
cyclic. We will denote this class by $\mathfrak{X}_{fc}$. The second
class which will be denoted by $\mathfrak{X}_p$, is the class of all
groups in which their non-trivial finite subgroups are of order $p$,
for a fixed prime $p$. Note that both classes are HNN-classes.
Clearly the Monsters constructed by Olshanskii satisfy the
requirements of the next theorem, but we don't use that monsters,
since we have a very elementary proof for our claims. What we  need
is the theorem 1.2 and some facts about finite subgroups of
HNN-extensions (and also those of free products). It is known that
(see \cite{Lynd}, page 212) every finite subgroup of any
HNN-extension
$$
G=\langle A, t: tFt^{-1}=\phi(F)\rangle
$$
is contained in some conjugate of $A$. Also, every finite subgroup
of any  free product $A\ast B$ is contained in some conjugate of $A$
or some conjugate of $B$.

\begin{theorem}
There exists a countable non-abelian simple group $M$ such that all
finite subgroups of $M$ are cyclic and for any prime $p$, the group
$M$ has an element of order $p$.
\end{theorem}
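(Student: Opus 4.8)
The plan is to apply Theorem 1.2 to the class $\mathfrak{X}_{fc}$ of groups whose finite subgroups are all cyclic. Since this class is an HNN-class (as noted in the paragraph preceding the statement, it is inductive, closed under subgroups, and closed under HNN-extensions), I can start from a suitable countable group $G \in \mathfrak{X}_{fc}$ and obtain an existentially closed group $M = G^{\ast} \in \mathfrak{X}_{fc}$ that is simple, countable, non-abelian, and in which elements of equal order are conjugate. The key point is choosing the seed group $G$ so that $M$ contains elements of every prime order. Let me think about what's needed here.

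Let me reconsider the structure of the argument. The class is an HNN-class.

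Let me verify that $\mathfrak{X}_{fc}$ is closed under HNN-extensions and free products, which the paper already asserts via the cited facts: every finite subgroup of an HNN-extension $\langle A, t : tFt^{-1} = \phi(F)\rangle$ lies in a conjugate of $A$, and every finite subgroup of a free product $A \ast B$ lies in a conjugate of $A$ or of $B$. Hence if all finite subgroups of the base group(s) are cyclic, the same holds for the HNN-extension or free product. This is exactly what Theorem 1.2 (via Theorem 1.5) requires.

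So the main task is the seed: I would take $G$ to be a countable group in $\mathfrak{X}_{fc}$ that already has elements of every prime order — for instance $G = \bigoplus_{p \text{ prime}} \mathbb{Z}/p\mathbb{Z}$, or the restricted direct product, whose finite subgroups are finite abelian groups with cyclic Sylow subgroups; but one must check these are cyclic. A cleaner choice is $G = \mathbb{Q}/\mathbb{Z}$, which is locally cyclic (every finitely generated subgroup is cyclic), hence lies in $\mathfrak{X}_{fc}$, is countable, and contains an element of each order $n$. Applying Theorem 1.2 to this $G$ produces a countable simple $M \in \mathfrak{X}_{fc}$ containing $G$, so $M$ has elements of every prime order, all its finite subgroups are cyclic, and $M$ is non-abelian and simple.

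The main obstacle will be verifying that the chosen seed group genuinely lies in $\mathfrak{X}_{fc}$ — i.e. that all its finite subgroups are cyclic — and confirming that embedding $G$ into $M$ preserves the presence of elements of each prime order (immediate, since $G \le M$). A secondary point is confirming that $M$ is non-abelian: this follows because an existentially closed group in a non-trivial HNN-class cannot be abelian (the simplicity argument in Theorem 1.4 already forces non-triviality of commutators), so $M$ is genuinely non-abelian and simple as claimed.
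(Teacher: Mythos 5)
Your proposal is correct, and it shares the paper's skeleton---apply the general existence theorem to the class $\mathfrak{X}_{fc}$ and exploit its closure under HNN-extensions and free products with $\langle x\rangle$ to get simplicity and non-triviality of the center---but it diverges genuinely on how the elements of every prime order are produced. The paper starts from an \emph{arbitrary} countable seed in $\mathfrak{X}_{fc}$ and proves by contradiction that existential closedness alone forces an element of order $p$ for each prime $p$: if $M$ had no such element, then every finite subgroup $H\leq M\times \mathbb{Z}_p$ would satisfy $H\subseteq \pi_1(H)\times\pi_2(H)$ with $|\pi_1(H)|$ coprime to $p$, so $M\times \mathbb{Z}_p$ would itself lie in $\mathfrak{X}_{fc}$; since the system $x^p=1,\ x\neq 1$ has a solution there, it would have one in $M$, a contradiction. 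You instead build the property into the seed, taking $G=\mathbb{Q}/\mathbb{Z}$ (locally cyclic, hence in $\mathfrak{X}_{fc}$, countable, with elements of every order), so that $G\leq M$ yields the prime orders for free. Your route is shorter and avoids the contradiction trick entirely; the paper's route is more informative, since it shows that \emph{every} group existentially closed in $\mathfrak{X}_{fc}$---whatever the seed, even a torsion-free one (the paper explicitly allows such seeds)---automatically contains elements of all prime orders. Two small repairs to your write-up: the results you cite as ``Theorem 1.5'' and ``Theorem 1.4'' are Theorems 2.2 and 2.1 of the paper; and your justification of non-abelianness (``the simplicity argument forces non-triviality of commutators'') is vague as stated, since simplicity alone does not exclude abelian groups---either use the paper's argument that the in-equation $ax\neq xa$ has a solution in $M\ast\langle x\rangle\in\mathfrak{X}_{fc}$, whence $Z(M)=1$, or simply observe that your $M$ is infinite and simple, while an abelian simple group is cyclic of prime order, hence finite.
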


\begin{proof}
We know that the class $\mathfrak{X}_{fc}$ is inductive and closed
under subgroup, so we can apply 1.2. Note that, we have many groups
which belong to $\mathfrak{X}_{fc}$, for example any finite cyclic
group, $\mathbb{Z}_{p^{\infty}}$, and any torsion free group. So,
let $G\in \mathfrak{X}_{fc}$ be a countable arbitrary element.
Therefore, there exists a group $G^{\ast}$ satisfying requirements
of 1.2. Let $M=G^{\ast}$ and $p$ be a prime. We show that $M$ has an
element of order $p$. Suppose this is not true. Hence the system
$$
x^p=1, \ x\neq 1
$$
has no solutions in $M$. Let $H\leq M\times \mathbb{Z}_p$ be a
finite subgroup. We have $H\subseteq \pi_1(H)\times \pi_2(H)$, where
$\pi_1$ and $\pi_2$ are projections. Since $M\in \mathfrak{X}_{fc}$,
so $\pi_1(H)$ is cyclic and by our assumption $p$ is co-prime to the
order of $\pi_1(H)$. Therefore $\pi_1(H)\times \pi_2(H)$ is cyclic
and so is $H$. Hence $M\times \mathbb{Z}_p$ belongs to
$\mathfrak{X}_{fc}$. But the above system has a solution in $M\times
\mathbb{Z}_p$, a contradiction.

Note that, for any cyclic group $\langle x\rangle$ the group $M\ast
\langle x\rangle$ belongs to $\mathfrak{X}_{fc}$. To see this, let
$H$ be a finite subgroup of this free product. Then  $H$ must be
contained in a conjugate of $M$ and hence $H$ is cyclic. Now,
suppose $1\neq a, b\in M$ and consider two elements
$$
u=axa^{-1}x^{-1},\ v=bxa^{-1}x^{-1}
$$
in $M\ast \langle x\rangle$. Clearly $u$ and $v$ have infinite
orders and so we can consider the HNN-extension
$$
M^{\ast}=\langle M\ast \langle x\rangle, t:\ tut^{-1}=v\rangle.
$$
Let $H$ be a finite subgroup of this HNN-extension. Then $H$ is
contained in some conjugate of $M$ and therefore it is cyclic. Hence
$M^{\ast}$ is also an element of $\mathfrak{X}_{fc}$. So, the
equation
$$
taxa^{-1}x^{-1}t^{-1}=bxa^{-1}x^{-1}
$$
has a solution for $t$ and $x$ in $M^{\ast}$, therefore it has
already a solution in $M$. We have
$$
b=(tat^{-1})(txa^{-1}x^{-1}t^{-1})(xa^{-1}x^{-1}),
$$
so $b\in \langle a^{M}\rangle$. Hence, for all $1\neq a\in M$, we
have $M=\langle a^{M}\rangle$. This proves that $M$ is simple.

Finally note that since for any $a\in M$, the in-equation $ax\neq
xa$ has a solution in $M\ast\langle x\rangle$, so the center of $M$
is trivial and hence $M$ is non-abelian.
\end{proof}

Now, a similar argument on the class $\mathfrak{X}_p$ proves the
next theorem.

\begin{theorem}
Let $p$ be a fixed prime. Then there exists a countable non-abelian
simple group $M$ (which is not torsion free) such that any finite
non-trivial subgroup of $M$ is cyclic of order $p$.
\end{theorem}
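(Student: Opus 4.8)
The plan is to imitate the proof of the preceding theorem almost verbatim, replacing the class $\mathfrak{X}_{fc}$ everywhere by $\mathfrak{X}_p$. Since $\mathfrak{X}_p$ is an HNN-class (in particular inductive and closed under subgroups), Theorem 1.2 applies. I would start from the group $G=\mathbb{Z}_p$, which clearly lies in $\mathfrak{X}_p$, and let $M=G^{\ast}$ be the existentially closed group in $\mathfrak{X}_p$ provided by Theorem 1.2; its cardinality bound $\max\{\aleph_0,|G|\}=\aleph_0$ makes it countable, and it must be infinite since a finite non-abelian simple group cannot have all its finite subgroups cyclic of order $p$. Two of the required properties are then immediate: because $\mathbb{Z}_p=G\leq M$, the group $M$ contains an element of order $p$ and is therefore not torsion free; and because $M\in\mathfrak{X}_p$, every non-trivial finite subgroup of $M$ is cyclic of order $p$ by the very definition of the class.

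The substantive work is to show that $M$ is simple and non-abelian, and here I would copy the free-product/HNN argument used for $\mathfrak{X}_{fc}$. Fix $1\neq a,b\in M$. Adjoin an infinite cyclic group $\langle x\rangle$ to form the free product $M\ast\langle x\rangle$, and set $u=axa^{-1}x^{-1}$ and $v=bxa^{-1}x^{-1}$; both are reduced words of infinite order, so the HNN-extension $M^{\ast}=\langle M\ast\langle x\rangle,\ t:\ tut^{-1}=v\rangle$ is well defined. Using the stated facts that every finite subgroup of a free product $A\ast B$ lies in a conjugate of $A$ or of $B$, and every finite subgroup of an HNN-extension lies in a conjugate of its base, I would verify that both $M\ast\langle x\rangle$ and $M^{\ast}$ belong to $\mathfrak{X}_p$. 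The point is that conjugates of the infinite-cyclic factor $\langle x\rangle$ contribute no non-trivial finite subgroups, so every finite subgroup of $M\ast\langle x\rangle$ is conjugate into $M$ and hence cyclic of order $p$; the same conclusion then passes to $M^{\ast}$ through its base $M\ast\langle x\rangle$. Existential closedness of $M$ in $\mathfrak{X}_p$ now lets me pull the solution of $tut^{-1}=v$ back into $M$, and the identity $b=(tat^{-1})(txa^{-1}x^{-1}t^{-1})(xa^{-1}x^{-1})$ shows $b\in\langle a^{M}\rangle$. As $a,b$ were arbitrary, $M=\langle a^{M}\rangle$ for every $1\neq a$, so $M$ is simple. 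Non-abelianness follows as before: the in-equation $ax\neq xa$ has a solution in $M\ast\langle x\rangle\in\mathfrak{X}_p$, so $Z(M)=1$.

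The main obstacle, such as it is, is precisely the class-membership bookkeeping in the previous paragraph. Unlike the $\mathfrak{X}_{fc}$ case, where one only needed finite subgroups to be cyclic, here one must ensure that no finite subgroup of order different from $p$ is accidentally created by the construction. This is why it is essential that the adjoined factor $\langle x\rangle$ be torsion free, so that its conjugates carry no finite subgroups at all, and that $u,v$ have infinite order, so that the HNN-extension does not collapse and its finite subgroups are governed by the base. Once these two points are checked, the membership $M^{\ast}\in\mathfrak{X}_p$ is automatic and the rest of the argument is identical to that of the preceding theorem.
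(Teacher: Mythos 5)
Your proposal is correct and is essentially the argument the paper intends: the paper's ``proof'' of this theorem is precisely the remark that the argument for $\mathfrak{X}_{fc}$ carries over to the HNN-class $\mathfrak{X}_p$, and you have filled in exactly those details (free product with $\langle x\rangle$, the HNN-extension over the infinite-order elements $u,v$, the finite-subgroup facts to keep everything inside $\mathfrak{X}_p$, and existential closedness to pull the solution back into $M$). Your choice of seeding the construction with $G=\mathbb{Z}_p$ is a clean way to secure the ``not torsion free'' clause directly, in place of the consistency argument with $M\times\mathbb{Z}_p$ used in the preceding theorem.
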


The following two corollaries can be deduced instantly.

\begin{corollary}
There  exists a non-abelian  two  generator group such that the
orders of the generators are distinct primes and its every finite
subgroup  is cyclic.
\end{corollary}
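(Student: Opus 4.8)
The plan is to realize the desired group as a two-generator subgroup of the group $M$ produced by Theorem 3.1. Recall that $M$ is a countable non-abelian simple group, all of whose finite subgroups are cyclic, and which contains an element of order $r$ for every prime $r$. First I would fix two distinct primes $p\neq q$ and choose $a\in M$ of order $p$ and $b_0\in M$ of order $q$, both of which exist by Theorem 3.1. Since the class $\mathfrak{X}_{fc}$ is closed under taking subgroups, any subgroup of $M$ again lies in $\mathfrak{X}_{fc}$; in particular every finite subgroup of a two-generator subgroup $\langle a,b\rangle\leq M$ is cyclic. Thus two of the three required properties, namely that the generators have distinct prime orders and that every finite subgroup is cyclic, come essentially for free once we exhibit suitable elements $a$ and $b$ inside $M$.

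The only point that needs care is to guarantee that the resulting two-generator group is non-abelian, and this is where the simplicity of $M$ enters. The naive choice $\langle a,b_0\rangle$ might be abelian; indeed, if $a$ and $b_0$ commute then, having coprime orders, they generate a finite (hence cyclic) group of order $pq$. So instead I would replace $b_0$ by a suitable conjugate. Because $M$ is non-abelian and simple, its centre is trivial, so the non-identity element $a$ is non-central and its centralizer $C_M(a)$ is a proper subgroup. I claim that some conjugate of $b_0$ escapes $C_M(a)$: otherwise every conjugate $g b_0 g^{-1}$ would commute with $a$, so the normal closure $\langle b_0^{M}\rangle$ would be contained in $C_M(a)$; but $b_0\neq 1$ together with the simplicity of $M$ forces $\langle b_0^{M}\rangle=M$, contradicting the properness of $C_M(a)$.

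Hence there is a conjugate $b=g b_0 g^{-1}$ of order $q$ with $ab\neq ba$. Then $\langle a,b\rangle$ is a two-generator group whose generators have distinct prime orders $p$ and $q$, which is non-abelian since $a$ and $b$ do not commute, and all of whose finite subgroups are cyclic, being a subgroup of $M\in\mathfrak{X}_{fc}$. This gives the required group. The argument is exactly the kind of ``instant deduction'' advertised by the statement, and I do not expect a genuine obstacle beyond the conjugation trick used to force non-commutativity; the substantive content has already been absorbed into Theorem 3.1.
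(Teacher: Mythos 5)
Your proof is correct and follows the route the paper intends: the paper gives no written argument (it merely states that the corollary ``can be deduced instantly'' from Theorem 3.1), and your argument supplies exactly that deduction, extracting two non-commuting elements of distinct prime orders from the group $M$ of Theorem 3.1. The one point requiring actual thought --- forcing non-commutativity --- you handle correctly via the normal closure of $b_0$ and the simplicity of $M$, so there is no gap.
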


\begin{corollary}
There exists a non-abelian $p$-group with two generators such that
its every finite subgroup has order $p$.
\end{corollary}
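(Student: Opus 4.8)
The plan is to realize the group as a subgroup of an existentially closed group, but the decisive point — the one a naive subgroup-of-$M$ argument misses — is that I must control the order of \emph{every} element, not merely of the finite subgroups. So I would run the construction inside a class whose very definition forces the $p$-group condition. Let $\mathfrak{X}$ be the class of all groups of exponent $p$ in which every finite subgroup has order $p$; equivalently, every non-identity element has order $p$ and no finite subgroup has order $p^2$. This class is closed under passage to subgroups and under unions of chains (a finite subgroup of a chain-union lies in some member, hence has order $p$, and exponent $p$ is preserved), so it is inductive and subgroup-closed and Theorem 1.2 applies to it.

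Granting a non-abelian seed $G_0\in\mathfrak{X}$, let $M\in\mathfrak{X}$ be an existentially closed group containing $G_0$ as produced by Theorem 1.2, with $G_0$ taken countable so that $M$ is countable. Since $M$ lies in $\mathfrak{X}$, \emph{every} element of $M$ has order $p$; thus $M$ is genuinely a $p$-group, and every finite subgroup of $M$ has order $p$. Because $G_0$ is non-abelian I may fix $a,b\in G_0$ with $ab\neq ba$ and put $N=\langle a,b\rangle\le M$: as $\mathfrak{X}$ is subgroup-closed, $N$ is again of exponent $p$ with all finite subgroups of order $p$, so $N$ is a two-generated non-abelian $p$-group of exactly the required type. (In fact $N\le G_0$ already works, which shows that the existentially closed object is cosmetic here and that the entire content of the statement is concentrated in the seed.)

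The main obstacle is therefore the production of the seed $G_0$, and this is precisely where the elementary HNN-methods of the earlier sections break down. Free products and HNN-extensions of exponent-$p$ groups immediately introduce elements of infinite order — already $\mathbb{Z}_p\ast\mathbb{Z}_p$ is not of exponent $p$, and the stable letter of any HNN-extension has infinite order — so, in sharp contrast to the treatment of $\mathfrak{X}_{fc}$ and $\mathfrak{X}_p$, these constructions leave the class $\mathfrak{X}$ and can witness neither non-commutativity nor the existence of a non-abelian member. Hence one genuinely needs, as external input, a single non-abelian group of exponent $p$ all of whose finite subgroups have order $p$.

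Finally I would note that this input constrains $p$. For $p=2$ no such group exists, since exponent $2$ forces commutativity; for $p=3$ none exists either, since a finitely generated group of exponent $3$ is finite and a non-abelian one contains a subgroup of order $9$. So the statement forces $p$ to be large, and for such $p$ both the seed $G_0$ and the required non-commuting pair can be extracted from an infinite group of exponent $p$ whose finite subgroups are all cyclic — for instance an Olshanskii group or a free Burnside group of large exponent. With any such seed in hand, the two steps above go through and deliver a non-abelian two-generated $p$-group, every element of which has order $p$, all of whose finite subgroups have order $p$.
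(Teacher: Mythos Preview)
Your argument is correct and is in fact more careful than the paper's. The paper gives no proof, declaring only that the corollary ``can be deduced instantly'' from Theorem~3.2; the intended deduction is to take the existentially closed $M\in\mathfrak{X}_p$ produced there, choose two non-commuting elements $a,b$ of order $p$ (these exist because the system $x^p=1,\ y^p=1,\ x\neq 1,\ y\neq 1,\ xy\neq yx$ is solvable in $M\ast(\mathbb{Z}_p\ast\mathbb{Z}_p)\in\mathfrak{X}_p$), and output $N=\langle a,b\rangle$. This $N$ is indeed a non-abelian two-generator group all of whose finite subgroups have order $p$, but --- exactly as you anticipate in your opening sentence --- nothing prevents $ab$ or other words from having infinite order, so $N$ need not be a $p$-group in the standard sense. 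The paper's ``instant'' deduction therefore either reads ``$p$-group'' loosely (as ``generated by elements of order $p$'') or overclaims.

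Your route, working inside the class of exponent-$p$ groups with no finite subgroup of order exceeding $p$, does yield an honest $p$-group; but, as you rightly stress, the HNN and free-product machinery of the paper is unavailable in that class, the existentially closed step becomes cosmetic, and the entire burden falls on an externally supplied seed. Your observation that the literal statement fails for $p=2,3$ confirms that such a seed cannot come from the paper's elementary methods and that the corollary, read at face value, requires input of Olshanskii type --- precisely what Section~3 set out to avoid.
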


\section{Existentially closed algebras}
We are going now  to find similar embedding theorems for Lie
algebras. But there are two major differences between groups and Lie
algebras. First, we don't have any suitable definition of {\em
torsion} in the case of Lie algebras so, in advance, we don't have a
parallel concept of T$_{\pi}$-Lie algebra and so on. Instead, we can
express our theorems in terms of arbitrary Lie algebras. The second
main difference is related to HNN-extensions of Lie algebras. Here,
an HNN-extension comes from a Lie algebra and a derivation of some
subalgebra, despite  groups where HNN-extensions are always defined
by groups and isomorphisms  between subgroups. We will give a brief
summary of HNN-extensions of Lie algebras in the next section. In
this section, we give the analogue of Theorem 1.2 for Lie algebras,
in fact since it can be formulated  for arbitrary non-associative
algebras, we prove it in the most general form.

\begin{lemma}
Let $\mathfrak{X}$ be an inductive class of (not necessarily associative) algebras over a field $K$. Suppose $\mathfrak{X}$ is closed under
subalgebra and $L\in \mathfrak{X}$. Then there exists an algebra $H\in \mathfrak{X}$ containing $L$ such that its dimension is at most
$$
\max\{ \aleph_0, \dim L, |K|\}.
$$
Further, for any system $S$ of equations and in-equations over $L$,
we have either of the following assertions:

1- $S$ has a solution in $H$.

2- For any extension $H\subseteq E\in \mathfrak{X}$, the system $S$ has no solution in $E$.
\end{lemma}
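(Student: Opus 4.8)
The plan is to mirror the proof of Lemma 1.1 (the group case) almost verbatim, replacing "subgroup generated by" with "subalgebra generated by" and the cardinal bound $\max\{\aleph_0,|G|\}$ with $\kappa=\max\{\aleph_0,\dim L,|K|\}$. First I would set $\kappa=\max\{\aleph_0,\dim L,|K|\}$ and check that the number of finite systems of equations and in-equations over $L$ is at most $\kappa$: each equation involves a term $p=q$ built from finitely many variables, finitely many algebra operations, scalars from $K$, and constants $c_a$ for $a\in L$, so the set of such terms has cardinality $\max\{\aleph_0,|K|,|L|\}$, and $|L|\le\max\{\aleph_0,|K|,\dim L\}=\kappa$ since $L$ is spanned by a basis of size $\dim L$ over $K$. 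The set of all finite systems is then bounded by $\kappa^{<\omega}=\kappa$. I would well-order these systems as $\{S_\alpha\}_{0\le\alpha<\kappa}$.

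Next I would carry out the transfinite construction exactly as in the lemma. Set $L_0=L$, and having defined an increasing chain $(L_\gamma)_{\gamma<\alpha}$ inside $\mathfrak{X}$ with each $\dim L_\gamma\le\kappa$, form $K_\alpha=\bigcup_{\gamma<\alpha}L_\gamma$. Since $\mathfrak{X}$ is inductive, $K_\alpha\in\mathfrak{X}$, and as a union of at most $\kappa$ algebras each of dimension at most $\kappa$ we get $\dim K_\alpha\le\kappa\cdot\kappa=\kappa$. If $S_\alpha$ has no solution in any extension of $K_\alpha$ inside $\mathfrak{X}$, put $L_\alpha=K_\alpha$; otherwise pick $E\in\mathfrak{X}$ containing $K_\alpha$ in which $S_\alpha$ has a solution $\bar u=(u_1,\dots,u_n)$, and set $L_\alpha=\langle K_\alpha,u_1,\dots,u_n\rangle\le E$. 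Because $\mathfrak{X}$ is closed under subalgebra, $L_\alpha\in\mathfrak{X}$; adjoining finitely many elements to $K_\alpha$ raises the dimension by at most $\aleph_0$, so $\dim L_\alpha\le\kappa$. Finally set $H=\bigcup_{\alpha<\kappa}L_\alpha$, which lies in $\mathfrak{X}$ by inductivity and has dimension at most $\kappa$. The dichotomy follows as before: if $S=S_\alpha$ has no solution in $H$, then it has none in $K_\alpha\subseteq L_\alpha\subseteq H$, hence by construction none in any extension of $K_\alpha$, and in particular none in any $E\supseteq H$.

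The only genuinely new points relative to the group proof are dimension-counting rather than order-counting, so I expect the main care (rather than a real obstacle) to be the bookkeeping that adjoining finitely many elements to a subalgebra of dimension $\le\kappa$ yields a subalgebra still of dimension $\le\kappa$. This uses that the subalgebra generated by a set $T$ is spanned by all finite nonassociative products of elements of $T$, so $\dim\langle K_\alpha,u_1,\dots,u_n\rangle\le|K_\alpha\cup\{u_1,\dots,u_n\}|^{<\omega}\le\kappa$; the nonassociativity is harmless here since we only need an upper bound on the number of spanning products. I would remark that no use is made of any algebra identities, so the argument is valid for arbitrary (not necessarily associative) $K$-algebras, which is why the lemma is stated in that generality and will serve as the Lie-algebra analogue of Lemma 1.1.
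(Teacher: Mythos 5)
Your proposal is correct and follows essentially the same route as the paper: the same transfinite construction indexed by a well-ordering of all finite systems over $L$, with the same cardinal $\kappa=\max\{\aleph_0,\dim L,|K|\}$, unions at limit stages handled by inductivity, and adjunction of solutions handled by closure under subalgebra. If anything, your write-up is slightly more complete than the paper's, which bounds $\dim H$ but leaves the final dichotomy (no solution in $H$ implies no solution in any extension of $K_\alpha$, hence in any extension of $H$) implicit, whereas you spell it out along with the dimension bookkeeping for finitely generated extensions.
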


\begin{proof}
We assume that $X$ is a countable set of variables and
$$
\eta=\max \{\aleph_0, \dim L, |K|\}.
$$
Any equation over $L$ consists of finitely many elements of $L$ and
$X$, so $\kappa$, the number of systems of equations and
in-equations over $L$, is  $|L\cup X|$. Note that $|L|=\max \{ \dim
L, |K|\}$, hence $\kappa=|L|+\aleph_0=\eta$.

We well-order the set of all  systems as $\{ S_{\alpha}\}_{\alpha}$, using ordinals $0\leq \alpha\leq \kappa$. Suppose $L_0=L$.
Let for any $0\leq \gamma\leq \alpha$, the algebra $L_{\gamma}\in \mathfrak{X}$ is defined in such a way that $|L_{\gamma}|\leq \kappa$ and
$$
\beta\leq \gamma \Rightarrow L_{\beta}\subseteq L_{\gamma}.
$$
We put
$$
E_{\alpha}=\bigcup_{\gamma\leq \alpha}L_{\gamma},
$$
so $E_{\alpha}\in \mathfrak{X}$ and further
$$
|E_{\alpha}|\leq \alpha |L_{\gamma}|\leq \kappa^2=\kappa.
$$
Suppose $S_{\alpha}$ has not solution in any extension of $E_{\alpha}$. Then we set $L_{\alpha}=E_{\alpha}$. If there is an extension
$E_{\alpha}\subseteq E\in \mathfrak{X}$ such that $S_{\alpha}$ has a solution $(u_1, \ldots, u_n)$ in $E$ ($n$ is the number of indeterminate
in $S_{\alpha}$), then we set
$$
L_{\alpha}=\langle E_{\alpha}, u_1, \ldots, u_n\rangle \subseteq E.
$$
Since $\mathfrak{X}$ is closed under subalgebra, so $L_{\alpha}\in \mathfrak{X}$ and also
$$
|L_{\alpha}|=|E_{\alpha}|\leq \kappa.
$$
Now, we define
$$
H=\bigcup_{0\leq \alpha\leq \kappa}L_{\alpha}
$$
which is an element of $\mathfrak{X}$. We have $|H|\leq \kappa^2=\kappa$ and hence
$$
\max\{ \dim H, |K|\}\leq \max \{\aleph_0, \dim L, |K|\},
$$
therefore we have
$$
\dim H\leq \max \{\aleph_0, \dim L, |K|\}.
$$

\end{proof}

\begin{theorem}
Let $\mathfrak{X}$ be an inductive class of (not necessarily associative) algebras over a field $K$. Suppose $\mathfrak{X}$ is closed under
subalgebra and $L\in \mathfrak{X}$. Then there exists an algebra $L^{\ast}\in\mathfrak{X}$ with the following properties,

1- $L$ is a subalgebra of $L^{\ast}$.

2- $L^{\ast}$ is existentially closed in the class $\mathfrak{X}$.

3- $\dim L^{\ast}\leq \max \{\aleph_0, \dim L, |K|\}$.
\end{theorem}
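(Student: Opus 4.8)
This theorem is the algebra analogue of Theorem 1.2, and since the preceding Lemma (the algebra version of the earlier group Lemma) is already available, the proof should be essentially a transcription of the proof of Theorem 1.2 into the setting of not-necessarily-associative algebras. I would set $L^0 = L$ and $L^1 = H$, where $H$ is the algebra produced by the Lemma applied to $L$. Inductively, having defined $L^m$, I let $L^{m+1}$ be the algebra guaranteed by the Lemma applied to $L^m$, and I then put $L^{\ast} = \bigcup_{m} L^m$. Because each $L^m \in \mathfrak{X}$ and $L^m \subseteq L^{m+1}$, and because $\mathfrak{X}$ is inductive, the union $L^{\ast}$ lies in $\mathfrak{X}$. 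This immediately gives property~1, since $L = L^0 \subseteq L^{\ast}$.

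For property~3 I would track cardinalities through the chain. Each application of the Lemma bounds $\dim L^{m+1}$ by $\max\{\aleph_0, \dim L^m, |K|\}$; since this bound does not grow past $\eta = \max\{\aleph_0, \dim L, |K|\}$ (one checks inductively that $\dim L^m \leq \eta$ for all $m$), the countable union satisfies $\dim L^{\ast} \leq \aleph_0 \cdot \eta = \eta = \max\{\aleph_0, \dim L, |K|\}$. For property~2, the key observation is finiteness: any system $S$ of equations and in-equations with coefficients from $L^{\ast}$ involves only finitely many parameters, hence all its coefficients already lie in some $L^m$. If $S$ is $\mathfrak{X}$-consistent, then by the dichotomy in the Lemma applied to $L^m$, its solution must appear in $L^{m+1}$ (otherwise $S$ would have no solution in any extension of $L^{m+1}$, contradicting consistency). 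Since $L^{m+1} \subseteq L^{\ast}$, the system $S$ has a solution in $L^{\ast}$, so $L^{\ast}$ is existentially closed in $\mathfrak{X}$.

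The only point requiring mild care — and the place where one must invoke the Lemma's dichotomy rather than just its existence statement — is property~2: one has to argue that an $\mathfrak{X}$-consistent system over $L^m$ genuinely picks up a solution at stage $m+1$, using that consistency rules out alternative~2 of the Lemma. Beyond this, the argument is routine, and no genuine obstacle arises, since the non-associativity of the algebras plays no role here; all the structural content (that HNN-type extensions stay inside $\mathfrak{X}$) is deferred to later sections and is not needed for this general existence theorem. I would therefore present the proof as a short three-paragraph transcription, emphasizing the finiteness argument for existential closedness and the cardinality bookkeeping for property~3.
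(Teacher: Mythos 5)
Your proposal is correct and takes essentially the same approach as the paper: iterate the lemma to build a countable chain $L = H^0 \subseteq H^1 \subseteq \cdots$, take $L^{\ast} = \bigcup_m H^m$, verify the dimension bound inductively, and use the finiteness of any system (so its coefficients lie in some $H^m$) together with the lemma's dichotomy to get existential closedness. Your write-up is in fact more explicit than the paper's, which carries out the finiteness-plus-dichotomy argument only in the group case (Theorem 1.2) and for algebras simply asserts that the union has properties 1--3.
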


\begin{proof}
Let $H^0=L$ and $H^1=H$ be an algebra satisfying requirements of the previous lemma. Suppose $H^m$ is defined and let $H^{m+1}$ be an algebra
obtained by the lemma from $H^m$. We have
\begin{eqnarray*}
\dim H^{m+1}&\leq& \max \{ \aleph_0, \dim H^m, |K|\}\\
            &=&\max \{\aleph_0, \dim L, |K|\}.
\end{eqnarray*}
Now, suppose
$$
L^{\ast}=\bigcup_mH^m.
$$
This is an algebra having the properties 1-2-3.
\end{proof}

\section{Embedding of Lie algebras}
In \cite{shirvani} and \cite{Alon}, the concept of the HNN-extension
is defined for Lie algebras. Suppose $L$ is a Lie algebra over a
filed $K$ and $A$ is a subalgebra. Let $\delta:A\to L$ be a
derivation. Define a Lie algebra $L_{\delta}$ with the presentation
$$
L_{\delta}=\langle L, t:\ [t, a]=\delta(a); (a\in A)\rangle.
$$
The properties of this HNN-extension is studied in \cite{shirvani}
and \cite{Alon}. It is proved that $L$ is a subalgebra of
$L_{\delta}$. Similar constructions are also introduced for Lie
$p$-algebras and rings in \cite{shirvani}. In this section, using
this HNN-extension and the notion of existentially closed Lie
algebras, we obtain a new embedding theorem.

\begin{theorem}
Let $L$ be a Lie algebra over a field $K$. Then there exists a Lie algebra $L^{\ast}$ having the following properties,

1- $L$ is a subalgebra of $L^{\ast}$,

2- for any non-zero $a, b\in L^{\ast}$, there exists $x\in L^{\ast}$
such that $[x,a]=b$, and so $L^{\ast}$ is simple.

3- $\dim L^{\ast}\leq \max \{ \aleph_0, \dim L, |K|\}$,

4- $L^{\ast}$ is not finitely generated,

5- every finite dimensional simple Lie algebra over $K$ embeds in
$L^{\ast}$,

6- every finitely presented Lie algebra over $K$ embeds residually
in $L^{\ast}$,

7- if $K$ is finite, then every finite dimensional Lie algebra over
$K$ embeds in $L^{\ast}$,

8- if $K$ is finite and $A$ is  finite dimensional Lie algebra over
$K$, then we have
$$
Der(A)\cong \frac{N_{L^{\ast}}(A)}{C_{L^{\ast}}(A)}.
$$
\end{theorem}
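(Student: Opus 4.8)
The plan is to mirror the proof of Theorem 2.1, replacing the group-theoretic HNN-extension by the Lie-algebra HNN-extension $L_{\delta}$ of Section 5. First I would take $\mathfrak{X}$ to be the class of all Lie algebras over $K$, which is inductive and closed under subalgebras, and apply Theorem 4.2 to obtain an existentially closed $L^{\ast}\in\mathfrak{X}$ containing $L$ with $\dim L^{\ast}\le\max\{\aleph_0,\dim L,|K|\}$. This settles parts 1 and 3 at once. For every remaining part the recipe is identical: build a concrete Lie-algebra extension $E\supseteq L^{\ast}$ inside $\mathfrak{X}$ in which the relevant finite system of equations and in-equations has a solution, and then invoke existential closedness to transport the solution back into $L^{\ast}$.

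For part 2, given non-zero $a,b\in L^{\ast}$ I would set $A=Ka$ and define $\delta\colon A\to L^{\ast}$ by $\delta(a)=b$. Because $A$ is one-dimensional and abelian, the derivation identity reduces to $[\delta(a),a]+[a,\delta(a)]=[b,a]+[a,b]=0$, so $\delta$ is a derivation and the HNN-extension $L^{\ast}_{\delta}=\langle L^{\ast},t:[t,a]=b\rangle$ lies in $\mathfrak{X}$ and contains $L^{\ast}$. Since $t$ solves $[x,a]=b$ there, existential closedness yields a solution in $L^{\ast}$. Simplicity is then immediate: if $I$ is a non-zero ideal and $0\neq a\in I$, then every $b$ equals $[x,a]\in I$, so $I=L^{\ast}$.

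Parts 4 through 8 follow the same pattern with different auxiliary algebras. For part 4 I would use the direct sum $L^{\ast}\oplus Kx$: the system $[x,a_1]=0,\ldots,[x,a_m]=0,\ x\neq 0$ is solved there, so $C_{L^{\ast}}(\langle a_1,\ldots,a_m\rangle)\neq 0$; since $L^{\ast}$ is simple and non-abelian its centre is zero, so no finite set generates $L^{\ast}$. For parts 5, 6, 7 I would use the direct sum $L^{\ast}\oplus A$ together with the structure-constant equations $[x_i,x_j]=\sum_k c_{ij}^k x_k$: for 5, adding $x_1\neq 0$ produces a non-zero, hence (as $A$ is simple) injective, homomorphism $A\to L^{\ast}$; for 6, adding $w\neq 0$ produces a homomorphism not killing $w$, giving the residual embedding; for 7, when $K$ is finite I would add the in-equations $\sum_i\lambda_i x_i\neq 0$ over all non-zero $(\lambda_i)\in K^n$ to force linear independence of the images of a basis. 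Part 8 is the most natural use of the Lie HNN-extension: given $\delta\in Der(A)$, the extension $L^{\ast}_{\delta}=\langle L^{\ast},t:[t,a]=\delta(a)\rangle$ solves $[x,a]=\delta(a)$ for $a\in A$, so the resulting $x\in N_{L^{\ast}}(A)$ realises $\delta$ as $\mathrm{ad}_x|_A$; the kernel of $x\mapsto\mathrm{ad}_x|_A$ is $C_{L^{\ast}}(A)$, which yields the isomorphism.

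Two points need care rather than cleverness. First, several of these systems are a priori infinite (those indexed by ``all $a\in A$''), but choosing a basis of $A$ and using $K$-linearity of the bracket and of $\delta$ reduces each to a finite system of equations, as the definition of existentially closed demands. Second --- and this is the only place a genuine hypothesis is forced --- part 7 needs $K$ finite precisely so that the linear-independence condition on the basis images becomes a \emph{finite} set of in-equations; over an infinite field one cannot in general express injectivity this way, which is exactly why part 5 instead leans on the simplicity of $A$. Throughout, the single external input is that the Lie HNN-extension $L_{\delta}$ contains $L$ as a subalgebra and lies in $\mathfrak{X}$, which is what is recorded from \cite{shirvani} and \cite{Alon} in Section 5.
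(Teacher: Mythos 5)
Your proposal is correct and follows essentially the same route as the paper: the same application of the existentially closed algebra theorem to the class of all Lie algebras over $K$, the same HNN-extensions $L^{\ast}_{\delta}$ for parts 2 and 8, and the same direct products $L^{\ast}\times E$ with structure-constant (in-)equations for parts 4 through 7, including the finite-field trick that makes the linear-independence conditions a finite system in part 7. Your added verifications (that $\delta$ is a derivation on the one-dimensional subalgebra, the explicit simplicity argument, and the reduction of the systems indexed by $a\in A$ to finite ones) are details the paper leaves implicit, but the underlying argument is identical.
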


\begin{proof}
We suppose that $\mathfrak{X}$ is the class of all Lie algebras and
then we apply the theorem 3.2. Hence, there exists an existentially
closed Lie algebra $L^{\ast}$ containing $L$ such that
$$
\dim L^{\ast}\leq \max \{ \aleph_0, \dim L, |K|\}.
$$
Let $0\neq a, b\in L^{\ast}$ and $\delta:\langle a\rangle \to L$ be the derivation $\delta(a)=b$. Consider the HNN-extension
$$
L^{\ast}_{\delta}=\langle L^{\ast}, t: [t, a]=b\rangle.
$$
We know that $L^{\ast}\leq L^{\ast}_{\delta}$ and the equation
$[x,a]=b$ has a solution in $L^{\ast}_{\delta}$, so 2 is proved.
This implies also that $L^{\ast}$ is simple.  To prove 4, suppose
$x_1, \ldots, x_n$ is a finite set of elements of $L^{\ast}$.
Consider the system
$$
[x, x_i]=0; (1\leq i\leq n), x\neq 0.
$$
This system has a solution in the Lie algebra $L^{\ast}\times \langle x\rangle$, and so we have $C_{L^{\ast}}(\langle x_1, \ldots, x_n\rangle)\neq 0$,
while $Z(L^{\ast})=0$. Therefore $L^{\ast}$ is not finitely generated.

Suppose $E$ is a finite dimensional simple Lie algebra with a basis $u_1, \ldots, u_n$ and suppose $[u_i, u_j]=\sum_r \lambda_{ij}^r u_r$.
Consider the system
$$
[x_i, x_j]=\sum_r\lambda_{ij}^r x_r; (1\leq i, j\leq n), x_i\neq 0; (1\leq i\leq n).
$$
This system has a solution in $L^{\ast}\times E$ and so there is a
non-zero homomorphism $E\to L^{\ast}$. Therefore $E$ embeds in
$L^{\ast}$. The proof of 6 is similar, so we prove 7. Suppose $K$ is
finite and let $E$ be a finite dimensional Lie algebra with a basis
$u_1, \ldots, u_n$ and structural constants $\lambda_{ij}^r$, i.e.
$[u_i,u_j]=\sum_r\lambda_{ij}^r u_r$. Let $T$ be the set of all
$n$-tuples $(a_1, \ldots, a_n)\in K^n$ with $a_i\neq 0$ for some
$i$. Consider the system
$$
[x_i, x_j]=\sum_r\lambda_{ij}^r x_r; (1\leq i,j\leq n)
$$
$$
\sum_r a_rx_r\neq 0; ( (a_1, \ldots,a_n)\in T).
$$
This system has a solution in $L^{\ast}\times E$, and so $E$ embeds in $L^{\ast}$.

Finally, to prove 8, let $K$ be finite and $A$ be finite dimensional
(subalgebra of $L^{\ast}$). Let $\delta\in Der(A)$ and consider the
HNN-extension
$$
L^{\ast}_{\delta}=\langle L^{\ast}, t: [t,a]=\delta(a); (a\in A)\rangle
$$
in which the system
$$
[x, a]=\delta(a); (a\in A)
$$
has a solution. So, there is an $x\in L^{\ast}$ such that $\delta(a)=[x,a]$ for all $a\in A$. Clearly $x\in N_{L^{\ast}}(A)$.
So there is an epimorphism $N_{L^{\ast}}(A)\to Der(A)$ with the kernel $C_{L^{\ast}}(A)$. So we have
$$
Der(A)\cong \frac{N_{L^{\ast}}(A)}{C_{L^{\ast}}(A)}.
$$

\end{proof}

As an interpretation of the above theorem, define a Lie algebra to
be {\em division Lie algebra}, if for all $a$ and $b$ with $a\neq
0$, there exists an $x$ such that $[a, x]=b$. Then the above theorem
shows that every Lie algebra can be embed in a simple division  Lie
algebra. However, that division Lie algebra is not finitely
generated. It can be asked that if there exists a finitely generated
division Lie algebra.

In \cite{shah}, we proved that if a Lie algebra $L$ over a filed of
characteristics zero has a finite dimensional ideal $U$ such that
$U^n\subseteq [A, U]$ for some abelian subalgebra $A$ and $n\geq 2$,
then $U$ is solvable. Here $[A, U]$ is the linear span of the set
$\{ [a, x]:\ a\in A, x\in U\}$. The Lie algebra $L^{\ast}$ which we
obtained in 5.1, shows that the assumption on the dimension of $U$
is essential, for example of we let $U=L^{\ast}$ and if we assume
that $A$ is any 1-dimensional subalgebra, then always $U^n\subseteq
[A, U]$, but clearly $U$ is not solvable ($L^{\ast}$ is simple).

\section{Existentially complete groups}
In this section, we define the notion of {\em existentially
complete} groups. It can be define the general frame of the
universal algebra. Existentially complete groups are special kind of
existentially closed groups, where in their definition, we use some
special sentences of the second order language of groups.

Let $G$ be a group and $A\leq G$ be a finitely generated subgroup.
Suppose $w(x_1, \ldots, x_n)\in G\ast F(x_1, \ldots, x_n)$. The
expression
$$
w(x_1, \ldots, x_n)\in A
$$
is called a {\em membership}. The negation of a membership is a {\em
non-membership}. We say that a finite system of memberships and
non-memberships (briefly, a membership system), is consistent, if a
there is a group $K$ containing $G$, such that the system has a
solution in $K$. A group $G$ with the property that every consistent
membership system has a solution in $G$, will be called
existentially complete. Clearly, such a group is in also
existentially closed. We can also define the notion of existentially
complete group in a given class. In some classes, the two notions
are equivalent, for example, in the class of locally finite groups,
there is no difference between two notions. We can define similarly,
existentially complete Lie algebras and also it is easy to see that
over finite fields, two properties of existentially closedness and
existentially completeness are equivalent.

A completely similar argument as in the section 1, the reader can
prove the next theorem. As a result, it shows that there exists
existentially complete groups.

\begin{theorem}
Let $\mathfrak{X}$ be an inductive class of groups, closed under
subgroup, such that $G\in \mathfrak{X}$. Then there exists an
existentially complete group $G^{\ast}$, containing $G$, such that
$$
|G^{\ast}|\leq \max\{ \aleph_0, |G|\}.
$$
\end{theorem}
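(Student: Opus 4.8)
The plan is to reproduce, essentially verbatim, the two-step construction of Section 1 (Lemma 1.1 followed by Theorem 1.2), replacing everywhere the finite systems of equations and in-equations by finite membership systems. Set $\kappa = \max\{\aleph_0, |G|\}$. The only genuinely new bookkeeping is the cardinality count for membership systems and the observation that, relative to a \emph{fixed} finitely generated subgroup, membership and non-membership are absolute along a chain of groups; once these are settled, the combinatorics of the transfinite construction are identical to those already carried out.

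First I would establish the membership analogue of Lemma 1.1: given an inductive subgroup-closed $\mathfrak{X}$ and $G \in \mathfrak{X}$, there is $H \in \mathfrak{X}$ with $G \le H$ and $|H| \le \kappa$, such that for every membership system $S$ over $G$ either $S$ is solvable in $H$, or $S$ has no solution in any $E \in \mathfrak{X}$ with $H \le E$. To count the membership systems, note that a single membership is a pair consisting of a word $w \in G \ast F(X)$ (there are $|G \ast F(X)| = \kappa$ of these, $X$ being countable) together with a finitely generated subgroup $A \le G$, and each such $A$ is named by a finite tuple of elements of $G$, of which there are at most $\kappa$. Hence there are at most $\kappa \cdot \kappa = \kappa$ memberships and at most $\kappa$ finite membership systems. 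Well-order them as $\{S_\alpha\}_{0 \le \alpha \le \kappa}$ and build the chain $G_0 = G \subseteq \cdots$ exactly as in Lemma 1.1: at stage $\alpha$ form $K_\alpha = \bigcup_{\gamma < \alpha} G_\gamma \in \mathfrak{X}$ (with $|K_\alpha| \le \kappa$), and if $S_\alpha$ has a solution $(u_1, \dots, u_n)$ in some extension $E \in \mathfrak{X}$ of $K_\alpha$ set $G_\alpha = \langle K_\alpha, u_1, \dots, u_n\rangle$, otherwise set $G_\alpha = K_\alpha$. Put $H = \bigcup_\alpha G_\alpha$. The dichotomy then follows exactly as before: if $S_\alpha$ is unsolvable in $H$ it was unsolvable in every extension of $K_\alpha$, hence in every extension of $H$.

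The point to verify carefully, and the closest thing to an obstacle, is that memberships and non-memberships transfer correctly between the groups of the chain. Since the subgroup $A$ in a membership $w \in A$ is \emph{fixed} and its underlying set never changes, for $A \le G_1 \le G_2$ and $g \in G_1$ one has $g \in A$ computed in $G_1$ iff $g \in A$ computed in $G_2$; thus a solution of $S_\alpha$ found in $E$ still witnesses every membership and non-membership after restricting to $G_\alpha$, provided $A \le G_\alpha$, which holds because $A \le G \le K_\alpha$. This absoluteness, not available for arbitrary first-order formulas, is exactly what makes the finitely generated restriction on $A$ the right hypothesis.

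Finally I would iterate as in Theorem 1.2: set $G^0 = G$, let $G^1 = H$, and let $G^{m+1}$ be the group produced by the lemma from $G^m$, then put $G^{\ast} = \bigcup_m G^m \in \mathfrak{X}$, so that $|G^{\ast}| \le \kappa$. Given a consistent membership system $S$ over $G^{\ast}$, with parameters including finitely many elements of $G^{\ast}$ and a finitely generated subgroup $A = \langle a_1, \dots, a_k\rangle$, finiteness of the data and finite generation of $A$ together place all the $a_i$ and all coefficients in some $G^m$, so $S$ is a membership system over $G^m$ with $A \le G^m$. A witnessing $K \in \mathfrak{X}$ containing $G^{\ast}$ is an extension of $G^{m+1}$ in which $S$ is solvable, so the lemma's dichotomy forces a solution already in $G^{m+1} \subseteq G^{\ast}$. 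Hence $G^{\ast}$ is existentially complete, completing the proof.
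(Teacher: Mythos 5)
Your proposal is correct and is essentially the paper's own argument: the paper gives no written proof of this theorem, stating only that it follows by ``a completely similar argument as in section 1,'' and your proof is precisely that argument (Lemma 1.1 and Theorem 1.2 redone for membership systems), with the two points the reader must supply --- the cardinality count for membership systems via finitely generated $A$, and the placement of $A$ inside some $G^m$ at the union step --- both handled correctly. The only quibble is interpretive: the absoluteness of $w(\bar{u})\in A$ along a chain holds for \emph{any} fixed subgroup $A$, finitely generated or not; finite generation is really needed for the counting argument and for catching $A$ inside a finite stage of the union, which is exactly where you in fact use it.
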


The following embedding theorem, gives an interesting application of
the notion of existentially complete groups.

\begin{theorem}
Let $M$ be an existentially complete group and $A$ be a finitely
generated subgroup. Then, every extension of $A$ by  any finite
group embeds in $M$.
\end{theorem}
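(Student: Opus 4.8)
The plan is to realize an arbitrary extension $E$ of $A$ (so $A\triangleleft E$ and $Q:=E/A$ is finite, say of order $m$) by solving over $M$ a finite system whose unknowns play the role of a transversal of $A$ in $E$. First I fix generators $a_1,\dots,a_k$ of $A$ and coset representatives $t_0=1,t_1,\dots,t_{m-1}$ of $A$ in $E$. The extension is completely recorded by the action and the factor set: for each $i$ there is an automorphism $\phi_i$ of $A$ with $t_iat_i^{-1}=\phi_i(a)$, and there are elements $c_{ij}\in A$ and indices $\sigma(i,j)$ (where $q_iq_j=q_{\sigma(i,j)}$ in $Q$) with $t_it_j=t_{\sigma(i,j)}c_{ij}$. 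All the values $\phi_i(a_l)$ and $c_{ij}$ lie in $A\le M$, hence are legitimate parameters.

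Next I introduce variables $x_1,\dots,x_{m-1}$ (and set $x_0=1$) and form the finite system $S$ consisting of the equations $x_ia_lx_i^{-1}=\phi_i(a_l)$ and $x_ix_j=x_{\sigma(i,j)}c_{ij}$, together with the non-memberships $x_i^{-1}x_j\notin A$ for $0\le i<j\le m-1$. Each equation $w=1$ is the membership $w\in\{1\}$ into the (finitely generated) trivial subgroup, while each $x_i^{-1}x_j\notin A$ is a genuine non-membership against the finitely generated subgroup $A$; so $S$ is a membership system in the sense of the preceding definitions, and in any case $M$, being existentially complete, is also existentially closed and can solve the equational part.

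To see that $S$ is consistent I would form the amalgamated free product $K=M\ast_A E$, in which both $M$ and $E$ embed and meet exactly in $A$. Taking $x_i=t_i\in E\subseteq K$ satisfies all the equations of $S$, and the non-memberships hold because the $t_i$ lie in distinct cosets of $A$ in $E$ and $E\cap A=A$ inside $K$. Thus $S$ has a solution in the overgroup $K$ of $M$, so by existential completeness $S$ already has a solution $x_1,\dots,x_{m-1}$ in $M$ itself.

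Finally I would verify that $\psi\colon E\to M$ defined by $\psi|_A=\mathrm{id}$ and $\psi(t_i)=x_i$ is an embedding. It is a well-defined homomorphism because the relations of $E$ over $A$ are exactly the action and factor-set relations, and these hold for the $x_i$: although $A$ may be infinite, it suffices that the conjugation relations hold on the generators $a_l$, since each $\phi_i$ is a homomorphism and the relations of $A$ itself already hold in $M$. Its image is $H=\langle A,x_1,\dots,x_{m-1}\rangle$, and the non-memberships guarantee that $x_0A,\dots,x_{m-1}A$ are $m$ distinct cosets, so $[H:A]=m=[E:A]$; as $\psi$ restricts to the identity on $A$ and induces a surjection of the finite set $E/A$ onto the equinumerous set $H/A$, this induced map is a bijection and $\psi$ is injective. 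Hence $E\cong H\le M$. The crux of the argument, and the step I would be most careful about, is precisely this reduction of the infinitely many defining relations of the extension to a finite system: the homomorphism property of the $\phi_i$ lets one impose the action only on the generators of $A$, the finiteness of $Q$ bounds the factor set, and the few non-memberships suffice to pin down the index and thereby force injectivity.
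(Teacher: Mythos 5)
Your proposal is correct and follows essentially the same route as the paper: encode the extension by conjugation-on-generators relations, factor-set relations, and coset non-memberships, establish consistency via the amalgamated free product $M\ast_A E$, and then invoke existential completeness to solve the system inside $M$. The only cosmetic difference is the direction of the final verification — you define the embedding $\psi\colon E\to M$ directly, while the paper forms the subgroup $K=\langle A, u_1,\ldots,u_n\rangle\leq M$ and exhibits an isomorphism $K\to G$ — but the substance (unique coset form of elements, homomorphism check via the factor set, injectivity from the non-memberships) is identical.
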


\begin{proof}
Let $A=\langle a_1, \ldots, a_m\rangle$ and $G$ be a group, having a
normal subgroup $B$, isomorphic to $A$. Let $\phi$ is the
isomorphism from $A$ to $B$ and $b_i=\phi(a_i)$. Suppose also that
$$
\frac{G}{B}=\{ g_1B, \ldots, g_nB\}.
$$
Let
$$
(g_iB)(g_jB)=g_{r(i,j)}B,
$$
where $r(i,j)$ is a function of $i$ and $j$. So, for any $i$ and
$j$, there exists an element $q_{ij}\in B$ such that
$$
g_{r(i,j)}^{-1}g_ig_j=q_{ij}.
$$
We also have $g_ib_jg_i^{-1}\in B$, so $g_ib_jg_i^{-1}=p_{ij}$, for
some $p_{ij}\in B$. Now, consider the next membership system
\begin{eqnarray}
x_{r(i,j)}^{-1}x_ix_j&=&\phi^{-1}(q_{ij}),\ \ \ \ \ \ \  (1\leq i,j\leq n)\\
         x_i^{-1}x_j&\not\in& A,\  \ \ \ \ \ \ \ \ \ \ \ \ \ \  (1\leq i<j\leq n)\\
         x_ia_jx_i^{-1}&=& \phi^{-1}(p_{ij}),\ \ \ \ \ \ \  (1\leq i\leq n, 1\leq
         j\leq m)
\end{eqnarray}
Let $M_1=\langle M\ast G: A=B, \phi\rangle$ be the amalgamated free
product of $M$ and $G$. We can verify easily that $(g_1, \ldots,
g_n)$ is a solution of the above system in $M_1$. Hence, the system
has a solution $(u_1, \ldots, u_n)$ in $M$. Let
$$
K=\langle a_1, \ldots, a_m, u_1, \ldots, u_n\rangle,
$$
which is a subgroup of $M$, containing $A$ as a normal subgroup, by
(3). This shows that every element of $K$ has the form $w(u_1,
\ldots, u_n)a$, such that $a\in A$ and $w$ is a group word in $u_1,
\ldots, u_n$. Now, by (1), we can replace every $u_iu_j$ by
$u_{r(i,j)}\phi^{-1}(q_{ij})$, and collecting all factors by this
method, we finally see that any element of $K$ has the unique  form
$u_ia$ for some $1\leq i\leq n$ and $a\in A$. Note that the same
observation is also true for the elements of $G$. Now, we prove that
$K\cong G$. Let $\psi:K\to G$ be define as $\psi(u_ia)=g_i\phi(a)$.
Clearly, $\psi$ is a bijection, so we show that it is a
homomorphism.

Let $k, k^{\prime}\in K$. We have $k=u_ia$ and
$k^{\prime}=u_ja^{\prime}$, for some $1\leq i, j\leq n$ and $a,
a^{\prime}\in A$. We have
$$
\psi(kk^{\prime})=\psi(u_iu_j(u_j^{-1}au_j)a^{\prime}).
$$
Note that by (3), we can write $u_j^{-1}au_j$ as $\phi^{-1}(p)$, for
some suitable $p\in B$. Also by (1), we have
$u_iu_j=u_{r(i,j)}\phi^{-1}(q_{ij})$, hence
\begin{eqnarray*}
\psi(kk^{\prime})&=&\psi(u_{r(i,j)}\phi^{-1}(q_{ij})\phi^{-1}(p)a^{\prime})\\
                 &=&g_{r(i,j)}q_{ij}p\phi(a^{\prime}).
\end{eqnarray*}
On the other hand, we have
\begin{eqnarray*}
\psi(k)\psi(k^{\prime})&=&g_i\phi(a)g_j\phi(a^{\prime})\\
                       &=&g_ig_j(g_j^{-1}\phi(a)g_j)\phi(a^{\prime})\\
                       &=&g_ig_jp\phi(a^{\prime})\\
                       &=&g_{r(i,j)}q_{ij}p\phi(a^{\prime}).
\end{eqnarray*}
This completes the proof.
\end{proof}

As a final note, we should say that the same result also can be
proved for Lie algebras.

{\bf Acknowledgement} The author would like to thank M. Kuzucuoglu
for comments and suggestions.

\end{document}